\newtheorem{theorem}{Theorem}
\newtheorem{definition}{Definition}
\newtheorem{corollary}{Corollary}
\newtheorem{lemma}{Lemma}
\newtheorem{proposition}{Proposition}
\newtheorem{example}{Example}
\newtheorem{remark}{Remark}
\newtheorem{algorithm}{Algorithm}
\definecolor{dkgreen}{rgb}{0,0.6,0}
\definecolor{gray}{rgb}{0.5,0.5,0.5}
\definecolor{mauve}{rgb}{0.58,0,0.82}
\tiny\color{gray},
\DeclareMathOperator{\vol}{vol}
\title{\textbf{Well-Rounded Twists of Ideal Lattices from Imaginary Quadratic Fields}}
\author{Nam H. Le, Dat T. Tran, Ha T. N. Tran}
\begin{document}
	\maketitle
	
\begin{abstract} 
In this paper, we investigate the properties of well-rounded twists of a given ideal lattice  of an imaginary quadratic field $K$. We show that every ideal lattice $I$ of $K$ has at least one well-rounded twist lattice. Moreover, we provide an explicit algorithm to compute all well-rounded twists of  $I$.
\end{abstract}

\section{Introduction}
A lattice of full rank in a Euclidean space is called \textit{well-rounded} if its set of minimal vectors spans the whole space. Well-rounded lattices are important in discrete optimization, in particular in the study of sphere packing, sphere covering, and kissing number problems \cite{martinet2013perfect}, as well as in coding theory \cite{banihashemi1998, WR1, WR2, damir2018analysis}. 

A \textit{well-rounded twist} of a lattice is defined in  \cite{damir2019well}. A method for computing all well-rounded twists of a given ideal lattice $I$ of a real quadratic field $K$ is also presented in this paper. It requires us to compute all principal ideals $\langle x \rangle \subset I$ such that $N(x)^2 \le N(I)^2 \Delta_K/3$ and its generator $x$ where $\Delta_K$ is the discriminant of $K$ (see Section 3 in \cite{damir2019well}). It is known that finding all ideals of norm bounded and finding a generator of a principal ideal are hard problems, especially when $\Delta_K$ is large (see \cite{lenstra1992algorithms}).  This method is therefore infeasible and hence one cannot always compute all well-rounded twists of a given ideal lattice $I$. 
 In contrast, we can show that this task is  feasible for an arbitrary imaginary quadratic field $K$. Indeed, in this paper we prove that every ideal lattice $I$ of $K$ has at least one well-rounded twist lattice (see Proposition \ref{theo6}). This result can be considered as a particular case of the one in  \cite{solan2019stable} which proves that every lattice (in any dimension) has at least a well-rounded twist. However, in our proof, we make use of an independent idea and argument from the ones in \cite{solan2019stable}. Indeed Proposition \ref{theo6} is implied from the proofs of Lemma \ref{bode3} and Theorem \ref{theo5}. We remark that a similar result for real quadratic fields has not been proved in \cite{damir2019well}.  Moreover, we provide algorithms to compute all well-rounded twists of  $I$ (see Section \ref{sec_alg}). In particular, we give an upper bound for the number of such well-rounded twists (see Corollary \ref{numbertwists}). The main idea is as below.

Let $K$ be an imaginary quadratic field and let $I$ be an integral ideal of $K$ with a $\mathbb{Z}$-basis $B=\{u,v\}$. 
We define the function 
\begin{align*}
  F(B)=F(u,v)=\dfrac{1}{4}\left[\left(\Im(u^2)+\Im(v^2)\right)^2-\Im(uv)^2\right] 
\end{align*}
here we denote by $\Im (z)$ the imaginary part of $z \in \mathbb{C}$ (see \ref{(3)} for an explicit formula). Note that a well-rounded twist of  $I$ is determined by a good basis of $I$ (see Definition \ref{def_goodbasis}). 
By Proposition \ref{theo2}, the basis $B$ is good if and only if $F(B)\le 0$. This inequation only has finitely many solutions as a result of Proposition \ref{theo3}. In addition, it provides us a necessary condition  for finding all good bases $B$, that is the imaginary part of $u^2$ and of $v^2$, denoted by $\Im (u^2)$ and $\Im (v^2)$, in absolute value are at most $\vol(I)$ (see ii) in Proposition \ref{theo3}). Thus one first lists all elements $x \in I$ such  that  
$\left(\Im (x^2)\right)^2\le \vol^2(I)$. Theorem \ref{theo4} says that there are only finitely many possibilities for $x$. After that, for each $x$ found, we solve $F (x, y) \le 0$ for all possible $y$ such that $\{x, y\}$ is a good basis of $I$. Finally, Theorem \ref{theo5} shows that given such an $x$, there are at most two good bases of the form $\{x, y \}$, up to similarity. The proof of this theorem also gives us  explicit formulae to compute those bases.  Hence, we can construct all good bases of the ideal $I$  demonstrated in Section \ref{sec_alg}.

Employing the algorithms presented in Section \ref{sec_alg}, we can first find all well-rounded twists of an ideal lattice $I$ of $K$ and after that check which lattices are similar using Remark \ref{similarity}. A natural question arisen from our work is how to compute only similar classes instead of all well-rounded twists of $I$. In other words, assume that we have computed  a list $\mathcal{L}$ of some well-rounded twists lattice of $I$, we would like to find a method to eliminate well-rounded twists $J$ that are similar to the ones in $\mathcal{L}$ before explicitly computing a basis for $J$. Another question is how to compute all well-rounded twists of ideal lattices in higher degree number fields. These open questions requires us a  further research in the future.

The structure of this paper is as follows. In Section \ref{sec_backgd}, we recall some basic definitions and properties of well-rounded lattices in $\mathbb{R}^2$ as well as of good bases and twists of a lattice. Our main results (Theorem \ref{theo4}, Theorem \ref{theo5} and Theorem \ref{theo6}) are presented in Section \ref{sec_main}. Based on the results of this section, we construct algorithms to compute all well-rounded twists of an ideal $I$ of $K$ and demonstrate them by an example in Section \ref{sec_alg}. We prove the correctness and analyze the complexity of these algorithms in Section \ref{sec_analysis}.
\newpage

\section{Background}\label{sec_backgd}
In this section we recall some basic definitions and properties of well-rounded lattices (twists) in $\mathbb{R}^2$, and then of well-rounded ideal lattices arisen from imaginary quadratic fields.

\begin{definition}
	Two planar lattices $\Lambda_1, \Lambda_2 \subset \mathbb{R}^2$ are called similar, denoted $\Lambda_1 \sim \Lambda_2$, if there exists a positive real number $\alpha$ and a $2\times2$ real orthogonal matrix $U$ such that $\Lambda_2 = \alpha U\Lambda_1$.
\end{definition}
See \cite{fukshansky2013well} for more details.  Moreover, if $B$ is a basis of $\Lambda_1$ then $\alpha UB$ is a basis of $\Lambda_2$ and if $B'$ is a basis of $\Lambda_2$ then there exists a basis $B$ of $\Lambda_1$ such that $B'=\alpha UB$ (we call $B$ and $B'$ are two similar bases). Thus, one has the following result.


 \begin{proposition}\label{propo}
	Suppose $\Lambda_1, \Lambda_2$ are two lattices of $\mathbb{R}^2$. Then $\Lambda_1\sim \Lambda_2$ if and only if there exist bases $B_1=\left\{x_1,y_2\right\}$ and $B_2=\{x_2,y_2\}$ of $\Lambda_1$ and $\Lambda_2$ respectively such that $\left|\cos(x_1,y_1)\right|=\left|\cos(x_2,y_2)\right|$ and $\dfrac{\|x_1\|}{\|y_1\|}=\dfrac{\|x_2\|}{\|y_2\|}$, where $(x_i, y_i)$ is the angle between two vectors $x_i$ and $y_i, i=1, 2$.
\end{proposition}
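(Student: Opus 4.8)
The plan is to prove the two implications separately, with the forward direction being essentially immediate and the reverse direction carrying the real content. For the forward direction, suppose $\Lambda_1 \sim \Lambda_2$, so that $\Lambda_2 = \alpha U \Lambda_1$ for some $\alpha > 0$ and some orthogonal $U$. Choosing any basis $B_1 = \{x_1, y_1\}$ of $\Lambda_1$, the discussion preceding the proposition shows that $B_2 = \{\alpha U x_1, \alpha U y_1\}$ is a basis of $\Lambda_2$; I would set $x_2 = \alpha U x_1$ and $y_2 = \alpha U y_1$. Since $U$ preserves inner products and norms, scaling by $\alpha$ multiplies every norm by $\alpha$ and every inner product by $\alpha^2$. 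Hence the factor $\alpha$ cancels both in $\cos(x_2, y_2) = \langle x_2, y_2\rangle/(\|x_2\|\,\|y_2\|)$ and in the ratio $\|x_2\|/\|y_2\|$, giving $\cos(x_1,y_1) = \cos(x_2, y_2)$ and $\|x_1\|/\|y_1\| = \|x_2\|/\|y_2\|$, which in particular yields the required equalities with absolute values.

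For the reverse direction, I would first reduce to the case of equal (signed) cosines. Assume $|\cos(x_1, y_1)| = |\cos(x_2, y_2)|$. If the two cosines have opposite signs, replace $y_2$ by $-y_2$: this is still a basis of $\Lambda_2$ (the change of basis is unimodular), it leaves the length ratio $\|x_2\|/\|y_2\|$ unchanged, and it negates $\cos(x_2, y_2)$, so that afterwards $\cos(x_1, y_1) = \cos(x_2, y_2)$. Next I pass to the unit vectors $a_i = x_i/\|x_i\|$ and $b_i = y_i/\|y_i\|$. The Gram matrix of the ordered pair $(a_1, b_1)$ is $\begin{pmatrix} 1 & c \\ c & 1\end{pmatrix}$ with $c = \cos(x_1, y_1)$, and by the preceding normalization this coincides with the Gram matrix of $(a_2, b_2)$.

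I then define $U$ to be the unique linear map of $\mathbb{R}^2$ sending $a_1 \mapsto a_2$ and $b_1 \mapsto b_2$, which exists and is unique because $a_1, b_1$ are linearly independent. The key point is that $U$ is orthogonal: it carries the basis $(a_1, b_1)$ to $(a_2, b_2)$, and since these two ordered bases have identical Gram matrices, $U$ preserves all pairwise inner products of the basis vectors, hence by bilinearity preserves the inner product on all of $\mathbb{R}^2$. Finally I set $\alpha = \|x_2\|/\|x_1\|$; the ratio hypothesis $\|x_1\|/\|y_1\| = \|x_2\|/\|y_2\|$ forces $\alpha = \|y_2\|/\|y_1\|$ as well, and a short computation then gives $\alpha U x_1 = \alpha \|x_1\|\, a_2 = x_2$ and likewise $\alpha U y_1 = y_2$. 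Thus $\alpha U$ carries $B_1$ to $B_2$, so $\alpha U \Lambda_1 = \Lambda_2$ and $\Lambda_1 \sim \Lambda_2$.

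I expect the main obstacle to be the production of a genuine orthogonal transformation, rather than merely an angle- and ratio-preserving correspondence between the two bases; the cleanest route is the Gram-matrix argument above, which avoids any explicit case analysis of rotations versus reflections. A secondary subtlety is the absolute value appearing in the cosine hypothesis, which is precisely what the preliminary sign normalization (replacing $y_2$ by $-y_2$) is designed to absorb.
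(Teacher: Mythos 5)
Your proof is correct, and it supplies in full the argument that the paper leaves to a citation of \cite{fukshansky2013well} and to the preceding remark that similar lattices have bases related by $\alpha U$. The forward direction matches that remark exactly, and your Gram-matrix construction of the orthogonal map $U$ (after the sign normalization $y_2 \mapsto -y_2$ to absorb the absolute value) is the standard and correct way to establish the converse.
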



In this section, we will denote by $\Lambda$ a lattice in $\mathbb{R}^2$ and by $B=\{x,y\}$ a basis of $\Lambda$ with $x=(a,c), y=(b,d) \in \mathbb{R}^2.$

 \begin{definition}
	For each real number $\alpha>0$, we define the matrix \begin{align*}
	T_\alpha=\begin{bmatrix}
	\alpha&0\\0&\dfrac{1}{\alpha}
	\end{bmatrix}.
	\end{align*}
	The lattice $T_\alpha\Lambda$ is called \textbf{the twisting lattice} or \textbf{the twist} of $\Lambda$ with respect to $\alpha$.
\end{definition}


\begin{proposition}\label{theo1}
	Let $B=\left\{x=(a,c) , y=(b,d) \right\}$ be a basis of a lattice $\Lambda$. Then for all $\alpha>0$, there exist $x'=(e,f)\in \mathbb{R}^2$ and $f>0$ such that the lattice generated by $T_\alpha B$ is similar to the lattice generated by $B'=\left\{(1,0) ,x'\right\}$.
\end{proposition}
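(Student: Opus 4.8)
The plan is to exhibit the required similarity explicitly as a composition of a scaling, a rotation, and (if necessary) a reflection, arranged so that together they form a single map $\beta U$ with $\beta>0$ and $U$ orthogonal, exactly as in the definition of similarity. Write the twisted basis vectors as $w_1=T_\alpha x=(\alpha a, c/\alpha)$ and $w_2=T_\alpha y=(\alpha b, d/\alpha)$, so that $T_\alpha\Lambda$ is the lattice generated by $\{w_1,w_2\}$. Since $\{x,y\}$ is a basis of a full-rank lattice and $T_\alpha$ is invertible, $\{w_1,w_2\}$ is again a basis of a full-rank lattice; in particular $w_1\neq 0$ and $w_1,w_2$ are linearly independent.

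First I would normalize the first vector. Set $\beta=1/\|w_1\|>0$, so that $\beta w_1$ is a unit vector, say $\beta w_1=(\cos\theta,\sin\theta)$ for some angle $\theta$. Let $U_\theta$ be the rotation by $-\theta$, which is orthogonal and satisfies $U_\theta(\beta w_1)=(1,0)$. Applying the similarity $\beta U_\theta$ to the whole lattice then sends $w_1$ to $(1,0)$ and $w_2$ to a vector $x'=(e,f):=\beta U_\theta w_2$. By the definition of similarity, the lattice generated by $\{(1,0),x'\}$ is similar to $T_\alpha\Lambda$.

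It remains only to arrange $f>0$. Because $\beta U_\theta$ is invertible, $\{(1,0),(e,f)\}$ is still a basis of a full-rank lattice, so these two vectors are linearly independent; in particular $f\neq 0$. If $f>0$ we are done. If $f<0$, I would further compose with the reflection $R=\begin{bmatrix}1&0\\0&-1\end{bmatrix}$ across the $x$-axis, which is orthogonal, fixes $(1,0)$, and sends $(e,f)$ to $(e,-f)$ with $-f>0$. The composite $RU_\theta$ is again orthogonal, so $\beta RU_\theta$ is a valid similarity producing the desired basis $\{(1,0),(e,-f)\}$ with positive second coordinate.

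The argument is essentially routine once organized this way; the only point requiring care is verifying $f\neq 0$, which follows from the linear independence of the image basis (equivalently, from the invertibility of $T_\alpha$ together with that of the similarity map). Beyond this, the main bookkeeping is simply to track the orthogonal factors so that the scaling, rotation, and possible reflection combine into a single map of the prescribed form $\beta U$.
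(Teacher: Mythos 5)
Your proof is correct, and it takes a somewhat different route from the paper's. The paper verifies the similarity indirectly via the criterion of Proposition \ref{propo}: it writes down the candidate vector
$z=\dfrac{1}{\alpha^4a^2+c^2}\left(ab\alpha^4+cd,\ \alpha^2(ad-bc)\right)$
out of thin air, checks that $\|z\|=\|T_\alpha y\|/\|T_\alpha x\|$ and that the angle condition holds, and then picks $x'=\pm z$ according to the sign of $ad-bc$ to make the second coordinate positive. You instead construct the similarity map explicitly as $\beta U$ with $\beta=1/\|T_\alpha x\|$ and $U$ a rotation (composed with a reflection across the first axis if needed), which sends $T_\alpha x$ to $(1,0)$ and $T_\alpha y$ to $x'$; linear independence gives $f\ne 0$, and the reflection fixes $(1,0)$ while flipping the sign of $f$. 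The two arguments are computationally equivalent --- if you carry out your rotation-plus-scaling in coordinates you recover exactly the paper's $z$, and your sign case split corresponds to theirs since the second coordinate of $z$ has the sign of $ad-bc$ --- but your version explains where $z$ comes from, while the paper's version has the advantage of recording the closed-form coordinates of $x'=\tau(\alpha,B)$, which it refers to later. One tiny point worth making explicit in your write-up: the reflection $R$ must be applied to the whole lattice, not just to the second basis vector, but since $R$ fixes $(1,0)$ this changes nothing and the resulting basis of the image lattice is indeed $\{(1,0),(e,-f)\}$.
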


\begin{proof}
	By Proposition \ref{propo}, we prove that there exists $x'\in \mathbb{R}^2$ such that $\dfrac{\|T_\alpha x\|}{\|T_\alpha y\|}=\dfrac{1}{\|x'\|}$ and $\left|\cos\left(T_\alpha x, T_\alpha y\right)\right|$ is equal to the absolute value of the cosine of  the angle between $(1,0)$ and $x'$. We consider 
	\begin{align*}
	z=\dfrac{1}{\alpha^4a^2+c^2}\left(ab\alpha^4+cd,\alpha^2(ad-bc)\right),
	\end{align*}
	
	then $
	\|z\|=\dfrac{1}{\alpha^4a^2+c^2}\sqrt{(ab\alpha^4+cd)^2+\alpha^4(ad-bc)^2}
	=\sqrt{\dfrac{\alpha^4 b^2+d^2}{\alpha^4a^2+c^2}}
	=\dfrac{\|T_\alpha y\|}{\|T_\alpha x\|}.
	$
	
	It also implies that
$	\|-z\|=\dfrac{\|T_{\alpha}y\|}{\|T_{\alpha}x\|}$. There are two cases: 
	
	\begin{itemize}
		\item \textbf{Case 1:} If $ad-bc>0$, we choose $x'=z$.
		\item \textbf{Case 2:} If $ad-bc<0$, we choose $x'=-z$.
	\end{itemize}

For both cases,  it is clear that $\left|\cos\left(T_\alpha x, T_\alpha y\right)\right|$ is equal to the absolute value of the cosine of  the angle between $(1,0)$ and $x'$. 
\end{proof}

\begin{remark}
This proposition is shown in particular case of the following statement: every planar lattice is similar to a lattice with a basis $B'$ as described in Proposition \ref{theo1} and hence planar lattices are identified with $SO_2(\mathbb{R})\backslash SL_2(\mathbb{R})/SL_2(\mathbb{Z})$ (see \cite{solan2019stable}). 

\end{remark}
The idea of Proposition \ref{theo1} is similar to \cite{damir2019well} (see the function in [15] in this paper).  For each given basis $B$, there are two vectors satisfying Proposition \ref{theo1}. However, we can add the condition that $\|x'\|\ge 1$ to have the uniqueness of $x'$. We denote $x'$ by $\tau(\alpha,B)$ to emphasize that it depends on $\alpha$ and $B$.

\begin{definition}
	Let $\Lambda$ be a lattice in $\mathbb{R}^2$.
	\begin{enumerate}[i)]
		
		\item The \textbf{set of the  minimal vectors} of $\Lambda$ is
		\begin{align*}
		S(\Lambda)=\left\{x\in \Lambda:\|x\|=\lambda_1(\Lambda)\right\},
		\end{align*}
		where   $\lambda_1(\Lambda)=\min_{0\ne x\in \Lambda}\left\|x\right\|$.
		
		\item The lattice $\Lambda$ is called \textbf{well-rounded} if $\text{span}_\mathbb{R}(S(\Lambda))=\mathbb{R}^2$.
		\item If $\Lambda$ is well-rounded and $\{x_1,x_2\}$ is its basis such that $x_1,x_2 \in S(\Lambda)$, then $\{x_1,x_2\}$ is called a \textbf{minimal basis} of $A$.
		\item A basis $\left\{x_1,x_2\right\}$ is \textbf{twistable} if there exists a matrix $T_\alpha$ such that $\|T_\alpha x\|=\|T_\alpha y\|$.
	\end{enumerate}
\end{definition}

\begin{remark}
Note that if $S(\Lambda)$ contains two independent vectors then these vectors form a minimal basis for $\Lambda$. This fact is not true for lattices of higher dimension (see \cite{nguyen2009hermite} for more details).
\end{remark}

The following lemma is a result of Proposition \ref{propo} and the definition of well-rounded lattices.

\begin{lemma}\label{similarwr}
Two well-rounded lattices are similar if and only if there exist their minimal bases $B_1=\left\{x_1,y_2\right\}$ and $B_2=\{x_2,y_2\}$  such that $\left|\cos(x_1,y_1)\right|=\left|\cos(x_2,y_2)\right|$.
\end{lemma}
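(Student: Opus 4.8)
The plan is to deduce the lemma directly from Proposition \ref{propo}, exploiting the defining feature of a minimal basis of a well-rounded lattice: both of its vectors have length equal to $\lambda_1(\Lambda)$, so the length ratio appearing in Proposition \ref{propo} is automatically equal to $1$. This is what collapses the two conditions of Proposition \ref{propo} down to the single cosine condition in the statement.

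For the forward implication, I would assume $\Lambda_1 \sim \Lambda_2$, so that $\Lambda_2 = \alpha U \Lambda_1$ for some $\alpha > 0$ and some orthogonal $U$. Since $U$ is orthogonal, $\|\alpha U x\| = \alpha \|x\|$ for every $x$, so the map $x \mapsto \alpha U x$ scales all norms by the same factor and therefore carries $S(\Lambda_1)$ bijectively onto $S(\Lambda_2)$, sending minimal bases to minimal bases. Because $\Lambda_1$ is well-rounded, it admits a minimal basis $\{x_1, y_1\}$; setting $x_2 = \alpha U x_1$ and $y_2 = \alpha U y_1$ then yields a minimal basis of $\Lambda_2$. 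As scaling and orthogonal maps preserve the angle between two vectors, one gets $\left|\cos(x_1,y_1)\right| = \left|\cos(x_2,y_2)\right|$, which is exactly the pair of minimal bases required.

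For the converse, I would start from given minimal bases $B_1 = \{x_1,y_1\}$ of $\Lambda_1$ and $B_2 = \{x_2,y_2\}$ of $\Lambda_2$ satisfying $\left|\cos(x_1,y_1)\right| = \left|\cos(x_2,y_2)\right|$, and verify the hypotheses of Proposition \ref{propo}. The cosine condition is given. For the ratio condition, I would invoke that a minimal basis consists of two minimal vectors, so $\|x_1\| = \|y_1\| = \lambda_1(\Lambda_1)$ and $\|x_2\| = \|y_2\| = \lambda_1(\Lambda_2)$; hence $\frac{\|x_1\|}{\|y_1\|} = 1 = \frac{\|x_2\|}{\|y_2\|}$. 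Both conditions of Proposition \ref{propo} then hold, so $\Lambda_1 \sim \Lambda_2$.

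I do not expect a serious obstacle here, since the argument is essentially a specialization of Proposition \ref{propo} to bases of equal side lengths. The only point requiring genuine care is the claim in the forward direction that a similarity preserves \emph{minimality} and not merely the property of being a basis, i.e. that $x \mapsto \alpha U x$ maps $S(\Lambda_1)$ onto $S(\Lambda_2)$; this follows cleanly from the identity $\|\alpha U x\| = \alpha \|x\|$, but it is the step that actually uses the well-roundedness hypothesis and should be stated explicitly.
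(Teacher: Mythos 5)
Your proof is correct and follows essentially the same route as the paper, which gives no written proof at all but simply remarks that the lemma ``is a result of Proposition \ref{propo} and the definition of well-rounded lattices''; your argument is the natural filling-in of that one-line justification. The two points you isolate --- that a similarity $x\mapsto \alpha U x$ carries $S(\Lambda_1)$ onto $S(\Lambda_2)$, and that a minimal basis forces the length ratio in Proposition \ref{propo} to equal $1$ --- are exactly the content the paper leaves implicit.
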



\begin{proposition}\label{prop2}
	Let $\beta=\dfrac{d^2-c^2}{a^2-b^2}$. Then  $B$ is twistable if and only if $\beta>0$. If this is the case, then $\alpha$ is unique and $\alpha=\beta^{1/4}$.
\end{proposition}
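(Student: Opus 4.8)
The plan is to translate the defining condition of twistability, namely that $\|T_\alpha x\| = \|T_\alpha y\|$ for some $\alpha > 0$, directly into a single equation in $\alpha$ and then determine when that equation admits a positive solution.

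First I would compute the two norms explicitly using the form of $T_\alpha$. Since $T_\alpha x = (\alpha a, c/\alpha)$ and $T_\alpha y = (\alpha b, d/\alpha)$, squaring (harmless, as norms are nonnegative) gives
\begin{align*}
\|T_\alpha x\|^2 = \alpha^2 a^2 + \frac{c^2}{\alpha^2}, \qquad \|T_\alpha y\|^2 = \alpha^2 b^2 + \frac{d^2}{\alpha^2}.
\end{align*}
Thus $B$ is twistable if and only if there is some $\alpha > 0$ with $\alpha^2 a^2 + c^2/\alpha^2 = \alpha^2 b^2 + d^2/\alpha^2$.

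Next I would rearrange this identity. Grouping the $\alpha^2$ and $\alpha^{-2}$ terms yields $\alpha^2(a^2 - b^2) = \alpha^{-2}(d^2 - c^2)$, and multiplying through by $\alpha^2 > 0$ produces the clean form $\alpha^4 (a^2 - b^2) = d^2 - c^2$. Assuming $a^2 \neq b^2$, so that $\beta$ is defined, this is equivalent to $\alpha^4 = \beta$. Since $\alpha > 0$ forces $\alpha^4 > 0$, such an $\alpha$ exists if and only if $\beta > 0$; and when $\beta > 0$ the equation $\alpha^4 = \beta$ has a unique positive real solution, namely $\alpha = \beta^{1/4}$ (the other real root $-\beta^{1/4}$ is excluded by positivity, and the remaining two fourth roots are non-real). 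This settles both the equivalence and the uniqueness simultaneously.

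The computation itself is routine; the only point that needs care is the degenerate configuration $a^2 = b^2$, in which $\beta$ is not defined. There the rearranged equation forces $d^2 = c^2$ for any twist to exist, and one checks that in that situation $\|T_\alpha x\| = \|T_\alpha y\|$ holds for \emph{every} $\alpha$, so uniqueness would fail; the hypothesis implicit in the statement (that $\beta$ be well-defined) rules this out. I expect this bookkeeping about well-definedness to be the only genuine obstacle, with the remainder being a direct algebraic manipulation.
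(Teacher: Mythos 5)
Your proof is correct. The paper gives no argument of its own here---it simply cites Proposition 1 of the reference on real quadratic fields---and your direct computation of $\|T_\alpha x\|^2=\alpha^2a^2+c^2/\alpha^2$, $\|T_\alpha y\|^2=\alpha^2b^2+d^2/\alpha^2$ followed by the reduction to $\alpha^4=\beta$ is exactly the standard argument behind that citation, with your remark on the degenerate case $a^2=b^2$ (where $\beta$ is undefined and uniqueness of $\alpha$ can fail) being a sensible additional precaution.
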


\begin{proof}
	See Proposition 1 of \cite{damir2019well}.
\end{proof}


To emphasize that $\alpha$ and $\beta$ are functions depending on 
the basis $B$ of $\Lambda$, we will write $\alpha_\Lambda(B)$ and $\beta_\Lambda(B)$ for $\alpha,\beta$. Note that $\alpha_\Lambda(B)=\left(\beta_\Lambda(B)\right)^{1/4}$.


\begin{proposition}\label{prop3}
	If $B$ is a twistable basis with the twisting matrix $T_\alpha$, then \begin{align*}
	\cos \theta_{T\alpha B}=\dfrac{ac+bd}{ad+bc}.
	\end{align*}
	
\end{proposition}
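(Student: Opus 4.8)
The plan is to compute $\cos\theta_{T_\alpha B}$ directly from its definition as the inner product of the two twisted basis vectors divided by the product of their norms, and then to simplify using the two facts we already have about a twistable basis: that the twisted vectors have equal length, and that $\alpha^4 = \beta_\Lambda(B) = (d^2-c^2)/(a^2-b^2)$ by Proposition \ref{prop2}.

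First I would record the twisted vectors. Since $x=(a,c)$ and $y=(b,d)$, we have $T_\alpha x = (\alpha a,\, c/\alpha)$ and $T_\alpha y = (\alpha b,\, d/\alpha)$. Hence the numerator of the cosine is
\begin{align*}
\langle T_\alpha x, T_\alpha y\rangle = \alpha^2 ab + \frac{cd}{\alpha^2},
\end{align*}
while the squared norms are $\|T_\alpha x\|^2 = \alpha^2 a^2 + c^2/\alpha^2$ and $\|T_\alpha y\|^2 = \alpha^2 b^2 + d^2/\alpha^2$. Because $B$ is twistable, by definition $\|T_\alpha x\| = \|T_\alpha y\|$, so the denominator $\|T_\alpha x\|\,\|T_\alpha y\|$ collapses to the single expression $\alpha^2 a^2 + c^2/\alpha^2$. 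Clearing the common factor $\alpha^2$ from numerator and denominator then yields
\begin{align*}
\cos\theta_{T_\alpha B} = \frac{\alpha^4 ab + cd}{\alpha^4 a^2 + c^2}.
\end{align*}

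The remaining, and main, step is purely algebraic: substitute $\alpha^4 = (d^2-c^2)/(a^2-b^2)$ and clear the denominator $a^2-b^2$. The numerator then becomes $ab(d^2-c^2)+cd(a^2-b^2)$ and the denominator becomes $a^2(d^2-c^2)+c^2(a^2-b^2)$. The crux is to recognize that each of these factors through the determinant $ad-bc$: regrouping shows the numerator equals $(ac+bd)(ad-bc)$ and the denominator equals $(ad+bc)(ad-bc)$. Since $\{x,y\}$ is a basis we have $ad-bc\neq 0$, so this common factor cancels and we obtain the claimed identity $\cos\theta_{T_\alpha B} = (ac+bd)/(ad+bc)$. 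The only place that needs any care is spotting the determinant factorization; everything else is routine bookkeeping, and the nonvanishing of $ad-bc$ is exactly what makes the final cancellation legitimate.
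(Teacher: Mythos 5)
Your proof is correct. The paper itself does not actually prove this proposition --- its ``proof'' is only a citation to Proposition 2 of \cite{damir2019well} --- so your direct computation is a self-contained argument rather than a variant of anything in the text, and it is the natural one. The steps check out: since $B$ is twistable with twisting matrix $T_\alpha$, the equal-norm condition $\|T_\alpha x\|=\|T_\alpha y\|$ legitimately collapses the product of norms to $\alpha^2a^2+c^2/\alpha^2$, and the two factorizations
\begin{align*}
ab(d^2-c^2)+cd(a^2-b^2)=(ac+bd)(ad-bc),\qquad a^2(d^2-c^2)+c^2(a^2-b^2)=a^2d^2-b^2c^2=(ad+bc)(ad-bc)
\end{align*}
are both valid, so cancelling $ad-bc\neq 0$ yields the stated formula. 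Two minor observations, neither of which is a gap: the substitution $\alpha^4=(d^2-c^2)/(a^2-b^2)$ presupposes $a^2\neq b^2$, which is implicit in Proposition \ref{prop2} (otherwise $\beta$ is undefined) and is in fact equivalent to the equal-norm identity you already invoked, so you are really using one fact, not two; and the final expression is meaningful because the paper restricts throughout to bases with $ad+bc\neq 0$, which is worth saying explicitly since your cancellation step only addresses $ad-bc$.
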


\begin{proof}
	See Proposition 2 in \cite{damir2019well}.
\end{proof}

\begin{proposition}\label{prop4}
	If $\left|\dfrac{ac+bd}{ad+bc}\right|\le \dfrac{1}{2}$ then $B$ is twistable.
\end{proposition}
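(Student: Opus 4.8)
The plan is to reduce everything to the twistability criterion already established in Proposition \ref{prop2}: the basis $B$ is twistable precisely when $\beta_\Lambda(B)=\frac{d^2-c^2}{a^2-b^2}>0$. So it suffices to show that the hypothesis forces $a^2-b^2$ and $d^2-c^2$ to be nonzero and of the same sign. First I would record what the hypothesis literally provides. Since the quotient $\frac{ac+bd}{ad+bc}$ is written down, we must have $ad+bc\neq 0$, and squaring the inequality $\left|\frac{ac+bd}{ad+bc}\right|\le\frac12$ (both sides being nonnegative) yields
\begin{align*}
4(ac+bd)^2\le (ad+bc)^2.
\end{align*}

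The key step, and the one that makes the whole argument collapse to a one-liner, is the algebraic identity
\begin{align*}
(ac+bd)^2-(ad+bc)^2=(a^2-b^2)(c^2-d^2).
\end{align*}
This is verified either by direct expansion or, more cleanly, by factoring each of the two squares as a product: one checks that $(ac+bd)-(ad+bc)=(a-b)(c-d)$ and $(ac+bd)+(ad+bc)=(a+b)(c+d)$, and multiplying these gives the right-hand side. I expect spotting this factorization to be the only real obstacle; once it is in hand the rest is automatic.

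Combining the two displays, the hypothesis gives
\begin{align*}
(ac+bd)^2-(ad+bc)^2\le \tfrac14(ad+bc)^2-(ad+bc)^2=-\tfrac34(ad+bc)^2<0,
\end{align*}
so by the identity $(a^2-b^2)(c^2-d^2)<0$. In particular neither factor vanishes and they have opposite signs, whence $(a^2-b^2)(d^2-c^2)>0$ and therefore $\beta_\Lambda(B)=\frac{d^2-c^2}{a^2-b^2}>0$. By Proposition \ref{prop2} the basis $B$ is twistable, which is exactly the claim. I would also remark that the argument in fact only uses $\left|\frac{ac+bd}{ad+bc}\right|<1$ (equivalently $(ac+bd)^2<(ad+bc)^2$); the sharper constant $\frac12$ is therefore more than enough here and is presumably recorded with a view toward its use elsewhere rather than for the sake of this implication.
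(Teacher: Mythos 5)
Your proof is correct: the identity $(ac+bd)^2-(ad+bc)^2=(a^2-b^2)(c^2-d^2)$ is verified by direct expansion, the hypothesis (which implicitly requires $ad+bc\neq 0$) forces this quantity to be strictly negative, and Proposition \ref{prop2} then gives twistability. The paper itself offers no argument here beyond citing Proposition 3 of \cite{damir2019well}, so your self-contained derivation supplies exactly the computation being deferred to; your closing observation that any bound strictly below $1$ would suffice is also accurate, the constant $\frac{1}{2}$ being what is needed later to guarantee that the twisted basis is minimal.
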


\begin{proof}
	See Proposition 3 of \cite{damir2019well}.
\end{proof}


In this case, since $|\cos\theta_{T_\alpha B}|\le \dfrac{1}{2}$ where $\beta=\dfrac{d^2-c^2}{a^2-b^2}$ and $\alpha =\beta^{1/4}$,  the lattice $T_\alpha \Lambda$ is well-rounded. Moreover, $\left\{T_\alpha x, T_\alpha y\right\}$ is a minimal basis of $T_\alpha \Lambda$.

\begin{definition}\label{def_goodbasis}
	We call a basis $B$ of $\Lambda$ \textbf{good for twisting} or a \textbf{good basis}  if \begin{align}\label{(1)}
	\left|\dfrac{ac+bd}{ad+bc}\right|\le \dfrac{1}{2}.
	\end{align}
\end{definition}

 This definition is equivalent to the following statement: there exists $\alpha>0$ such that $T_\alpha \Lambda$ is well-rounded with a minimal basis $T_\alpha B$.\\
  By transforming inequation \eqref{(1)}, a basis $B$ is a good basis if 
 \begin{align}\label{(2)}
a^2c^2+abcd+b^2d^2-\dfrac{(ad-bc)^2}{4}\le 0 \textrm{ and }ad+bc\ne 0.
\end{align}
From the first inequality of \eqref{(2)}, one may define the polynomial 
\begin{align}
\nonumber
 F(B)=&(ac)^2+abcd+(bd)^2-\dfrac{(ad-bc)^2}{4}\\
\label{(3)}
=&\left(ac+bd+\dfrac{ad+bc}{2}\right)\left(ac+bd-\dfrac{ad+bc}{2}\right).
\end{align}
Frow now, we only need to consider the basis $B=\{(a,c);(b,d)\}$ where $ad+bc\ne 0$.


We have the following result that is similar to Theorem 1 in \cite{damir2019well}.

\begin{proposition}\label{theo2}
	Let $\Lambda$ be a lattice in $\mathbb{R}^2$. 
	\begin{enumerate}[(i)]
		\item A basis $B$ is good for twisting if and only if $F(B)\le 0$.
		\item  If B is good for twisting, then $		\min \{ (ac)^2, (bd)^2 \} \le\dfrac{\vol^2(\Lambda)}{4}.$
	\end{enumerate}
\end{proposition}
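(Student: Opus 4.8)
The plan is to handle the two parts separately; both reduce to elementary algebra once we record two facts: the factored expression for $F(B)$ from \eqref{(3)}, and the identity $\vol(\Lambda)=|ad-bc|$ (the absolute value of the determinant of the basis matrix).

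For part (i), I would start from the factored form in \eqref{(3)} and expand the product to obtain
\[
F(B)=\left(ac+bd\right)^2-\frac{1}{4}\left(ad+bc\right)^2 .
\]
Since by our standing assumption we only consider bases with $ad+bc\neq 0$, the inequality $F(B)\le 0$ is equivalent to $(ac+bd)^2\le \frac14(ad+bc)^2$. Both sides being nonnegative, taking square roots gives $|ac+bd|\le \frac12|ad+bc|$, and dividing by $|ad+bc|$ yields $\left|\frac{ac+bd}{ad+bc}\right|\le \frac12$, which is exactly the defining inequality \eqref{(1)} of a good basis. Thus (i) is a chain of equivalences with no real obstacle.

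For part (ii), write $p=ac$ and $q=bd$ and recall $\vol^2(\Lambda)=(ad-bc)^2$. Assuming $B$ is good, the first form of $F$ in \eqref{(3)} together with $F(B)\le 0$ gives $p^2+pq+q^2\le \frac14(ad-bc)^2=\frac{\vol^2(\Lambda)}{4}$. The only nonroutine observation is that the quadratic form $p^2+pq+q^2$ dominates $\frac12(p^2+q^2)$; this follows from the identity $p^2+pq+q^2-\frac12(p^2+q^2)=\frac12(p+q)^2\ge 0$. Combining this with the trivial bound $\min\{p^2,q^2\}\le \frac12(p^2+q^2)$, I would chain the estimates
\[
\min\{(ac)^2,(bd)^2\}\le \frac12\left(p^2+q^2\right)\le p^2+pq+q^2\le \frac{\vol^2(\Lambda)}{4},
\]
which is precisely the assertion.

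I do not expect a genuine difficulty here; the one point requiring care is selecting the right auxiliary inequality in (ii)—namely recognizing that $p^2+pq+q^2\ge \tfrac12(p^2+q^2)$, which is what lets us pass from the good-basis bound on $p^2+pq+q^2$ to a bound on the \emph{smaller} of $(ac)^2$ and $(bd)^2$. I would also double-check the sign convention $\vol(\Lambda)=|ad-bc|$ (rather than $ad-bc$) so that squaring is harmless throughout.
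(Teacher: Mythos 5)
Your proof is correct and follows essentially the same route as the paper: part (i) is the direct expansion of the factored form of $F$, and part (ii) uses exactly the paper's decomposition, since your inequality $p^2+pq+q^2-\tfrac12(p^2+q^2)=\tfrac12(p+q)^2\ge 0$ is the same identity the paper writes as $(ac)^2+abcd+(bd)^2=\tfrac{(ac)^2+(bd)^2}{2}+\tfrac{(ac+bd)^2}{2}$.
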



\begin{proof}
	\begin{enumerate}[(i)] 
		\item This fact can be easily implied from the definition of $F$.
		\item Recall that $\text{vol}^2(\Lambda)=(ad-bc)^2$. Since $B$ is good for twisting, it follows that \begin{align*}
		0\ge (ac)^2+abcd+(bd)^2-\dfrac{\vol^2(\Lambda)}{4}&=\dfrac{(ac)^2+(bd)^2}{2}+\dfrac{(ac+bd)^2}{2}-\dfrac{\vol^2(\Lambda)}{4}\\&\ge  \dfrac{(ac)^2+(bd)^2}2-\dfrac{\text{vol}^2(\Lambda)}{4}.
		\end{align*}
		Hence $\min\left\{(ac)^2,(bd)^2\right\}\le \dfrac{\vol^2(\Lambda)}{4}$.
	\end{enumerate}
\end{proof}


For $D$ positive and squarefree, we put $K=\mathbb{Q}(\sqrt{-D})$ and
\begin{align*}
\delta=\left\{\begin{matrix}
&\sqrt{-D},&\text{ if } -D\not\equiv 1\mod 4\\&\dfrac{1+\sqrt{-D}}{2},&\text{ if }-D\equiv 1\mod 4
\end{matrix}\right..
\end{align*}
The ring of integers of $K$ is $\mathcal{O}_K=\mathbb{Z}\left[\delta\right]$.
The embeddings $\sigma_1,\sigma_2: K\longrightarrow \mathbb{C}$ are given by 
\begin{align*}
    \sigma_1\left(x+y\delta\right)=x+y\delta,\hspace*{0.3cm} \sigma_2(x+y\delta)=\left\{\begin{matrix}x-y\delta\textrm{ if }D\not\equiv 1 \pmod 4\\x+y(1-\delta)\textrm{ if } D\equiv1\pmod 4.
    \end{matrix}\right.    
\end{align*}
We have $\sigma_2=\overline{\sigma_1}$. Hence we denote by $\sigma_K$ the embedding from $K$ into $\mathbb{R}^2$ defined by $\sigma_K=\left(\Re \sigma_2, \Im \sigma_2\right),$ 
where $\Re$ and $\Im$ stand for the real and imaginary parts, respectively.\\ 
Now let $I\subset \mathcal{O}_K$ be an ideal and  $\left\{t,y+g\delta \right\}$ its $\mathbb{Z}$-basis where $0\le y<t$, $g|y,t$ and $0< g\le t$. This basis is called the \textbf{canonical basis} of $I$. \\
Suppose $u,v \in I$ form a $\mathbb{Z}$-basis of $I$, then we can represent the lattice $\Lambda_K(I)=\sigma_K(I)$ as $$\Lambda_K(I)=\left(\begin{matrix}
&\Re(u)&\Re(v)\\&-\Im(u)&-\Im(v)
\end{matrix}\right)\mathbb{Z}^2=\left(\begin{matrix}
\dfrac{u+\overline{u}}{2}&\dfrac{v+\overline{v}}{2}\\\dfrac{\overline{u}-u}{2i}&\dfrac{\overline{v}-v}{2i}
\end{matrix}\right)\mathbb{Z}^2.$$
where $\Re(z)=\frac{z+\overline{z}}{2},\Im(z)=\frac{z-\overline{z}}{2i}$.


\begin{proposition}\label{prop5}
	Let $B=\{u,v\}$ be a twistable basis of $I$. Then $\cos \theta_{T_\alpha B}=\dfrac{\Im(u^2)+\Im(v^2)}{2\Im(uv)}.$\end{proposition}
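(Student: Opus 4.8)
The plan is to reduce the statement to Proposition \ref{prop3}, which already computes $\cos\theta_{T_\alpha B}$ in terms of the real Cartesian coordinates of the basis vectors, and then translate those coordinates back into the complex quantities $\Im(u^2)$, $\Im(v^2)$ and $\Im(uv)$. The first thing I would do is read off the coordinates from the matrix representation of $\Lambda_K(I)$ given just before the statement. Since $\sigma_K=(\Re\sigma_2,\Im\sigma_2)$ with $\sigma_2=\overline{\sigma_1}$, the image of $u$ is $\sigma_K(u)=(\Re(u),-\Im(u))$ and likewise for $v$. So in the notation $x=(a,c)$, $y=(b,d)$ of the earlier propositions I would set
\begin{align*}
a=\Re(u),\quad c=-\Im(u),\quad b=\Re(v),\quad d=-\Im(v).
\end{align*}
Because $B=\{u,v\}$ is assumed twistable, Proposition \ref{prop3} applies and gives $\cos\theta_{T_\alpha B}=\dfrac{ac+bd}{ad+bc}$, so the whole problem becomes rewriting this single fraction.

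Next I would record the two elementary identities that convert products of real and imaginary parts into imaginary parts of products. Writing $u=\Re(u)+i\,\Im(u)$ and expanding $u^2$ yields $\Im(u^2)=2\,\Re(u)\,\Im(u)$, and similarly $\Im(v^2)=2\,\Re(v)\,\Im(v)$; expanding $uv$ yields $\Im(uv)=\Re(u)\,\Im(v)+\Im(u)\,\Re(v)$. Substituting the coordinate dictionary above, the numerator becomes
\begin{align*}
ac+bd=-\Re(u)\,\Im(u)-\Re(v)\,\Im(v)=-\tfrac{1}{2}\bigl(\Im(u^2)+\Im(v^2)\bigr),
\end{align*}
while the denominator becomes
\begin{align*}
ad+bc=-\Re(u)\,\Im(v)-\Re(v)\,\Im(u)=-\Im(uv).
\end{align*}

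Finally I would divide these two expressions; the two minus signs cancel and the factor $\tfrac12$ moves into the denominator, producing exactly $\dfrac{\Im(u^2)+\Im(v^2)}{2\Im(uv)}$, which is the claimed formula. The computation is entirely routine, so the only point demanding care — and the place where an error is most likely to creep in — is the sign bookkeeping in the coordinate dictionary, since the embedding $\sigma_K$ uses $\sigma_2=\overline{\sigma_1}$ and therefore flips the sign of the imaginary part in the second coordinate. I would double-check that both the numerator and the denominator pick up the same sign from this flip so that they cancel cleanly; once that is confirmed, the identity follows immediately and no further estimates or case analysis are needed.
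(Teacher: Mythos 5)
Your proposal is correct and is essentially the paper's own proof: the paper likewise applies Proposition \ref{prop3} to the coordinates $a=\Re(u)$, $c=-\Im(u)$, $b=\Re(v)$, $d=-\Im(v)$ coming from the matrix for $\Lambda_K(I)$, writing $ac+bd=\frac{\overline{u}^2-u^2}{4i}+\frac{\overline{v}^2-v^2}{4i}=-\tfrac12\bigl(\Im(u^2)+\Im(v^2)\bigr)$ and $ad+bc=\frac{\overline{uv}-uv}{2i}=-\Im(uv)$. Your sign bookkeeping through the conjugate embedding is exactly right, and the two minus signs cancel as you describe.
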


\begin{proof}
	Proposition \ref{prop3} provides that \begin{align*}
	\cos T_\alpha B=\dfrac{\dfrac{\overline{u}^2-u^2}{4i}+\dfrac{\overline{v}^2-v^2}{4i}}{\dfrac{\overline{uv}-uv}{2i}}=\dfrac{\Im(u^2)+\Im(v^2)}{2\Im(uv)}.
	\end{align*}
\end{proof}


\section{Main results}\label{sec_main}
From now on, we follow the notations used in Section \ref{sec_backgd}.
The second inequality of \eqref{(2)} becomes
\begin{align}\label{condition}
    \Re(u)\Im(v)+\Re(v)\Im(u)\ne 0
\end{align}

Applying Proposition \ref{theo2}, we obtain a similar result as the one in Theorem 2 in \cite{damir2019well} as below.


\begin{proposition}
	\label{theo3}
	Let $I$ be an ideal with a basis $B = \{u, v\}$. Then, we have the following statements. \begin{enumerate}
		\item [(i)] A basis $B$ is good for twisting if and only if $F(u, v)  \le 0$, in which case the twisting matrix
		$T_\alpha$ is given by $\alpha=\left(\dfrac{\Im(v)^2-\Im(u)^2}{\Re(u)^2-\Re(v)^2}\right)^{\dfrac{1}{4}}_.$
		\item [(ii)] If $B$ is good for twisting, then $$\min\left\{\left(\Im (u^2)\right)^2,\left(\Im(v^2)\right)^2\right\}\le \vol^2\left(\Lambda_K(I)\right).$$
		\item[(iii)] A basis $\left\{u,v\right\}$ is good for twisting if and only if so is the basis $\left\{v,u\right\}$. Moreover, their well-rounded twisting lattices are similar.  
	\end{enumerate}
	\begin{proof}
		The two first statements are corollaries of Proposition \ref{theo2}. The last one can be implied by applying $(i)$ and Proposition \ref{theo5}.
	\end{proof}

\end{proposition}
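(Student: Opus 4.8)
The plan is to push the whole statement through the coordinate dictionary coming from the matrix presentation of $\Lambda_K(I)$ and then quote Proposition \ref{theo2}. Writing the two columns of that matrix as $x=(\Re(u),-\Im(u))$ and $y=(\Re(v),-\Im(v))$, I identify $a=\Re(u)$, $c=-\Im(u)$, $b=\Re(v)$, $d=-\Im(v)$ with the coordinates of Section \ref{sec_backgd}. The first thing I would record is the bridge between the two formulas for $F$: expanding gives $\Im(u^2)=2\Re(u)\Im(u)=-2ac$, $\Im(v^2)=-2bd$, and $\Im(uv)=\Re(u)\Im(v)+\Re(v)\Im(u)=-(ad+bc)$, so that $F(u,v)=\tfrac14[(\Im(u^2)+\Im(v^2))^2-\Im(uv)^2]=(ac+bd)^2-\tfrac14(ad+bc)^2$; using $\tfrac14(ad+bc)^2-\tfrac14(ad-bc)^2=abcd$ this equals the polynomial $F(B)$ of \eqref{(3)}. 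With this identity fixed, statements (i) and (ii) become direct corollaries of Proposition \ref{theo2}.

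For (i), the equivalence is just Proposition \ref{theo2}(i), reading $F(B)\le 0$ as $F(u,v)\le 0$ through the identity above. For the twisting parameter I would note that a good basis is twistable by Proposition \ref{prop4} (its defining quantity has absolute value $\le\tfrac12$), so Proposition \ref{prop2} applies and gives $\alpha=\beta^{1/4}$ with $\beta=(d^2-c^2)/(a^2-b^2)>0$ (in particular $\Re(u)^2\ne\Re(v)^2$, so the expression is legitimate). Substituting $c=-\Im(u)$, $d=-\Im(v)$, $a=\Re(u)$, $b=\Re(v)$ turns $\beta$ into $\frac{\Im(v)^2-\Im(u)^2}{\Re(u)^2-\Re(v)^2}$, which is exactly the claimed formula for $\alpha$.

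For (ii), Proposition \ref{theo2}(ii) yields $\min\{(ac)^2,(bd)^2\}\le\tfrac14\vol^2(\Lambda_K(I))$. Since $(ac)^2=\tfrac14(\Im(u^2))^2$ and $(bd)^2=\tfrac14(\Im(v^2))^2$, cancelling the common factor $\tfrac14$ gives $\min\{(\Im(u^2))^2,(\Im(v^2))^2\}\le\vol^2(\Lambda_K(I))$, as asserted.

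Finally, for (iii) the first half is immediate from the manifest symmetry of $F$: both $\Im(u^2)+\Im(v^2)$ and $\Im(uv)$ are unchanged when $u$ and $v$ are swapped, so $F(u,v)=F(v,u)$ and $\{u,v\}$ is good exactly when $\{v,u\}$ is. The similarity of the two twists is the only step that needs an idea rather than bookkeeping, and here I would avoid the forward reference to Theorem \ref{theo5}: swapping $u\leftrightarrow v$ negates both $d^2-c^2$ and $a^2-b^2$, hence leaves $\beta$ and $\alpha$ unchanged, while $\Lambda_K(I)$ does not depend on the ordering of its basis, so the two well-rounded twists are one and the same lattice $T_\alpha\Lambda_K(I)$ and are in particular similar. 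If one prefers an argument that never computes $\alpha$, Proposition \ref{prop5} gives the twisted angle $\cos\theta_{T_\alpha B}=\frac{\Im(u^2)+\Im(v^2)}{2\Im(uv)}$, which is symmetric in $u$ and $v$; the two minimal bases therefore have equal $|\cos|$, and Lemma \ref{similarwr} delivers the similarity directly.
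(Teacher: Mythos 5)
Your proposal is correct, and for parts (i) and (ii) it follows the same route the paper intends: both are read off from Proposition \ref{theo2} once the coordinate dictionary $a=\Re(u)$, $c=-\Im(u)$, $b=\Re(v)$, $d=-\Im(v)$ is in place. You make explicit the bridge identity $\Im(u^2)=-2ac$, $\Im(v^2)=-2bd$, $\Im(uv)=-(ad+bc)$ reconciling the introduction's formula for $F(u,v)$ with \eqref{(3)}, and you correctly chain Proposition \ref{prop4} (good $\Rightarrow$ twistable) to Proposition \ref{prop2} to land on $\alpha=\left(\frac{\Im(v)^2-\Im(u)^2}{\Re(u)^2-\Re(v)^2}\right)^{1/4}$; the paper leaves all of this implicit. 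Where you genuinely diverge is part (iii): the paper cites Theorem \ref{theo5}, a forward reference to a statement about counting extensions of a fixed $x$, whose relevance to the symmetry claim is indirect at best. Your replacement is both more elementary and self-contained: the symmetry of $F$ under $u\leftrightarrow v$ gives the equivalence of goodness, and since swapping negates both $d^2-c^2$ and $a^2-b^2$, the twisting parameter $\beta$ (hence $\alpha$) is unchanged while the lattice $\Lambda_K(I)$ is independent of the ordering of its basis, so the two twists are literally the same lattice $T_\alpha\Lambda_K(I)$. Your fallback via the symmetry of $\cos\theta_{T_\alpha B}$ in Proposition \ref{prop5} together with Lemma \ref{similarwr} is equally valid. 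Either version of your argument for (iii) is an improvement on the citation given in the paper.
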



From \eqref{(3)}, one obtains that $F(B)=F_1(B)F_2(B)$ where
 \begin{align}\label{(8)}
F_1(B)&=\dfrac{\overline{u}^2-u^2}{4i}+\dfrac{\overline{v}^2-v^2}{4i}+\dfrac{\overline{uv}-uv}{4i}=-\dfrac{1}{2}\left(\Im(u^2)+\Im(v^2)+\Im(uv)\right)\  \text{and}\\\nonumber
F_2(B)&=\dfrac{\overline{u}^2-u^2}{4i}+\dfrac{\overline{v}^2-v^2}{4i}-\dfrac{\overline{uv}-uv}{4i}=-\dfrac{1}{2}\left(\Im(u^2)+\Im(v^2)-\Im(uv)\right).
\end{align}

A similar version for Proposition 5 of \cite{damir2019well} is the following.

\begin{proposition}\label{prop6}
	Let $I \subset \mathcal{O}_K$ be an ideal. The hexagonal lattice is a twist of $\Lambda_K(I)$ if and only if $I$ has a basis $B = \{ u, v\}$ such that $F(B) = 0$.
\end{proposition}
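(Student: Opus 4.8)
The plan is to reduce the claim to the angle condition $|\cos\theta_{T_\alpha B}| = \tfrac12$ and then read it off from the factorization $F = F_1F_2$. First I would record the characterization of the hexagonal lattice among planar well-rounded lattices: any minimal basis $\{p,q\}$ of the hexagonal lattice satisfies $|\cos(p,q)| = \tfrac12$, and conversely, by Lemma \ref{similarwr}, a well-rounded lattice is similar to the hexagonal lattice if and only if it admits a minimal basis whose two vectors meet at an angle with $|\cos| = \tfrac12$. As being a twist is understood up to similarity in this framework, proving the proposition amounts to showing that $\Lambda_K(I)$ has a well-rounded twist with a minimal basis at $|\cos| = \tfrac12$ precisely when some $\mathbb{Z}$-basis $B$ of $I$ satisfies $F(B) = 0$.

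For the direction assuming $F(B) = 0$: since $F(B) \le 0$, Proposition \ref{theo3}(i) shows that $B$ is good for twisting, so $B$ is twistable and the twist $T_\alpha\Lambda_K(I)$ is well-rounded with minimal basis $T_\alpha B$. Using the factorization $F(B) = F_1(B)F_2(B)$ with $F_1, F_2$ as in \eqref{(8)}, the hypothesis $F(B) = 0$ forces $F_1(B) = 0$ or $F_2(B) = 0$, i.e. $\Im(u^2) + \Im(v^2) = \pm\,\Im(uv)$. Proposition \ref{prop5} then gives $\cos\theta_{T_\alpha B} = \frac{\Im(u^2)+\Im(v^2)}{2\Im(uv)} = \pm\tfrac12$, so in either case the minimal basis $T_\alpha B$ meets at $|\cos| = \tfrac12$ and the twist is similar to the hexagonal lattice.

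For the converse, suppose $T_\alpha\Lambda_K(I)$ is similar to the hexagonal lattice for some $\alpha > 0$. Then it is well-rounded, and I would pick a minimal basis $\{p,q\}$ of $T_\alpha\Lambda_K(I)$; being a minimal basis of a planar well-rounded lattice similar to the hexagonal one, it satisfies $\|p\| = \|q\|$ and $|\cos(p,q)| = \tfrac12$. Pulling back through $T_\alpha^{-1}$ and using that $\sigma_K$ is a lattice isomorphism, $\{T_\alpha^{-1}p,\,T_\alpha^{-1}q\}$ corresponds to a $\mathbb{Z}$-basis $B = \{u,v\}$ of $I$ with $T_\alpha u = p$ and $T_\alpha v = q$. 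Since $\|T_\alpha u\| = \|T_\alpha v\|$, the basis $B$ is twistable, and by the uniqueness in Proposition \ref{prop2} its twisting parameter is exactly $\alpha$; hence Proposition \ref{prop5} applies and yields $|\cos\theta_{T_\alpha B}| = |\cos(p,q)| = \tfrac12$. This forces $\Im(u^2) + \Im(v^2) = \pm\,\Im(uv)$, i.e. $F_1(B) = 0$ or $F_2(B) = 0$, whence $F(B) = 0$. The step I expect to be the most delicate is precisely this pull-back: one must ensure that the chosen minimal basis of the twist descends to an honest $\mathbb{Z}$-basis of $I$ and that the prescribed $\alpha$ is the twisting parameter it realizes, so that the cosine formula of Proposition \ref{prop5} is legitimately available.
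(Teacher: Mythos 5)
Your proof is correct and follows essentially the same route as the paper's: the factorization $F=F_1F_2$ from \eqref{(8)} together with Proposition \ref{prop5} identifies $F(B)=0$ with $\left|\cos\theta_{T_\alpha B}\right|=\tfrac{1}{2}$, which characterizes the hexagonal twist. The paper compresses this chain of equivalences into three sentences; your write-up only adds the explicit pull-back of a minimal basis in the converse direction, which the paper leaves implicit.
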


\begin{proof}
	We use the fact that $F(u,v)=0$ if and only if $\Im(u^2)+\Im(v^2)=\pm \Im(uv)$. Equivalently, one has $\left|\cos \theta_{T_\alpha B}\right|=\dfrac{1}{2}$. In this case, the twist lattice of $\Lambda$ is hexagonal.
\end{proof}


Our goal is to compute all good bases of a given ideal lattice $\Lambda_K(I)$, up to similarity. Now we fix a suitable element $x\in I$ and find all good bases of the form $\{x, y\}$ of $I$.
\begin{definition}
	For $x\in I$, we say that $x$ can be extended to a (good) basis if there exists $y \in I$ 	such that $\{x, y\}$ is a (good) basis of $I$.
\end{definition}


Let $\{u,v\}$ be any basis of $I$ and write $x = au + cv$ for $a, c \in \mathbb{Z}$ without loss the generality, we can assume $a\ge0$. Then $x$ can be extended to a basis if and only if there exists $y = bu + dv \in I$ such that $ad - bc = \pm 1$. This occurs if and only if $(a,c)=1$. Combining with Proposition \ref{theo3} we obtain the following initial strategy to compute all well-rounded twists of a given ideal lattice $\Lambda_K(I)$, up to similarity.

\begin{enumerate}[$\square$]
	\item \textbf{Step 1:} Find a basis $\{u,v\}$ of $I$.
	\item \textbf{Step 2:} List all $x=au+cv\in I$ such that $\left(\Im(x^2)\right)^2\le \vol^2(\Lambda_K(I)),a\ge 0$ and $\gcd(a,c)= 1$. 
	\item \textbf{Step 3:} For each $x$ found in \textbf{Step 2}, we solve $F (x, y) \le 0$ for all possible $y$ such that $\{x, y\}$ is a basis of $I$. Note that such basis must satisfy the condition \eqref{condition}.
\end{enumerate}

\begin{remark}\label{rmk1}
	In \textbf{step 2}, we have to list  all elements $x$ such that $\left(\Im(x^2)\right)^2\le \vol^2(\Lambda_K(I))$. For example, $I=\mathcal{O}_K$ with $K=\mathbb{Q}(\sqrt{-5})$. We consider the elements $a$ and $c\sqrt{-5}$ ($a,c\in \mathbb{Z}$). The square of $a$ or $c\sqrt{-5}$ ($a,c\in \mathbb{Z}$) has zero imaginary part. It means the inequation $\left(\Im(x^2)\right)^2\le \vol^2\left(\Lambda_K(I)\right)$ may have infinitely many solutions. We can avoid this by using an idea given by Theorem \ref{theo5}.
\end{remark}


\begin{lemma}
	\label{bode1} Let $I\ne (0)$ be an integral ideal of $\mathcal{O}_K$. For all $x\ne 0$ and $x\in I$, we can choose a $\mathbb{Z}$-basis $B=\{u,v \}$ of $I$ such that $\Im(uv)$ and $\Im(vx)$ are non-zero.
\end{lemma}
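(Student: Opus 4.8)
The plan is to encode both requirements as the non-vanishing of a single symmetric bilinear form. I regard $K=\mathbb{Q}(\sqrt{-D})$ as a two-dimensional $\mathbb{Q}$-vector space, view its elements as complex numbers through the embedding, and set $\langle a,b\rangle:=\Im(ab)$. Since $\Im$ is $\mathbb{Q}$-linear and $\Im(ab)=\Im(ba)$, this is a symmetric $\mathbb{Q}$-bilinear form, and I would first check that it is nondegenerate: if a nonzero $a$ lay in its radical, then $\Im(a)=\langle a,1\rangle=0$ forces $a\in K\cap\mathbb{R}=\mathbb{Q}$, whence $\langle a,\sqrt{-D}\rangle=\Im(a\sqrt{-D})$ is a nonzero multiple of $\sqrt{D}$, a contradiction. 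In this language the two conditions of the lemma read $\langle u,v\rangle\ne 0$ and $\langle v,x\rangle\ne 0$, and moreover $\langle v,v\rangle=\Im(v^2)$. A useful concrete translation is $\langle z,w\rangle=0\iff zw\in\mathbb{Q}\iff w\in\mathbb{Q}\overline{z}$, so each \emph{bad} set is a single $\mathbb{Q}$-line, while $\langle z,z\rangle=0$ holds exactly when $z\in\mathbb{Q}\cup\mathbb{Q}\sqrt{-D}$.

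Next I would build the basis in two moves, starting from any $\mathbb{Z}$-basis $\{u_1,v_1\}$ of $I$; since $I\ne(0)$ has rank $2$ and $I\otimes_{\mathbb{Z}}\mathbb{Q}=K$, the vectors $u_1,v_1$ are $\mathbb{Q}$-independent. Because the form is nondegenerate and $x\ne 0$, the set $x^{\perp}=\{w:\langle x,w\rangle=0\}=\mathbb{Q}\overline{x}$ is a line, and two independent vectors cannot both lie on it, so after relabelling I may assume $v_1\notin x^{\perp}$, i.e. $\Im(v_1 x)\ne 0$. If already $\langle u_1,v_1\rangle\ne 0$ I am done. Otherwise I apply the shear $u_1\mapsto u_1+v_1$, which is unimodular and leaves $v_1$ (hence the condition $\Im(v_1 x)\ne 0$) untouched, and compute $\langle u_1+v_1,v_1\rangle=\langle u_1,v_1\rangle+\langle v_1,v_1\rangle=\Im(v_1^2)$.

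The only situation left to exclude is the degenerate case $\Im(u_1 v_1)=0$ and $\Im(v_1^2)=0$ at once, and this is precisely where nondegeneracy does the work: these two equalities say that $v_1$ is orthogonal to both $u_1$ and $v_1$, hence to all of $K=\mathbb{Q}u_1+\mathbb{Q}v_1$, forcing $v_1=0$ and contradicting that $v_1$ is a basis vector. Thus $\Im(v_1^2)\ne 0$ whenever $\Im(u_1 v_1)=0$, so the sheared basis $\{u_1+v_1,\,v_1\}$ meets both requirements. I expect this degenerate case --- when $v_1$ is purely real or purely imaginary --- to be the only genuine obstacle, and the nondegeneracy of $\langle\,,\rangle$ is the clean device that dispatches it; alternatively one could argue it by hand, since $v_1\in\mathbb{Q}\overline{u_1}$ together with $v_1\in\mathbb{Q}\cup\mathbb{Q}\sqrt{-D}$ would place $u_1$ and $v_1$ on a common $\mathbb{Q}$-line, again contradicting their independence.
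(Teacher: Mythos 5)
Your proof is correct, and it takes a genuinely different route from the paper's. The paper works directly with the canonical basis $\{t,\,y+g\delta\}$: since $t$ is real and $y+g\delta$ is not, $\Im\bigl(t(y+g\delta)\bigr)\ne 0$ automatically, and one simply orders the pair according to whether $\Im(x)=0$ or not so that $\Im(vx)\ne 0$; no shear is ever needed. You instead package both conditions as non-vanishing of the symmetric $\mathbb{Q}$-bilinear form $\langle a,b\rangle=\Im(ab)$, prove it nondegenerate, and then run a basis-free argument: relabel so that $v_1\notin x^{\perp}$ (a line cannot contain two independent vectors), and repair $\Im(u_1v_1)=0$ by the unimodular shear $u_1\mapsto u_1+v_1$, with nondegeneracy ruling out the simultaneous vanishing of $\Im(u_1v_1)$ and $\Im(v_1^2)$. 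All steps check out, including the identification $x^{\perp}=\mathbb{Q}\overline{x}$ and the observation that the form takes values in the one-dimensional $\mathbb{Q}$-space $\mathbb{Q}\sqrt{D}$, so the kernel of $\langle x,\cdot\rangle$ really is a line. What your approach buys is generality: it starts from an arbitrary $\mathbb{Z}$-basis and would transfer to other quadratic (or CM) settings where no canonical basis is singled out. What the paper's approach buys is economy and compatibility with the rest of the text: the basis it produces is the canonical basis or its swap, which is exactly the basis fed into Theorem \ref{theo5} and Algorithms \ref{algo1} and \ref{algo2}, whereas your sheared basis, while perfectly valid for the lemma as stated, is not the one those later computations assume.
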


\begin{proof}
	Fix the canonical basis $B=\{t,y+g\delta\}$ of $I$. Clearly, we have $\Im\left(t(y+g\delta)\right)\ne 0$. If $\Im(x)=0$, we choose $u=t, v=y+g\delta$ for $t,g> 0,y\ge 0$ such that $ y<t$, $g|t,y$ and $tg|N(y+g\delta)$. Then the basis $B'=\{u,v\}$ satisfies $\Im(uv)\ne 0$ and $\Im(vx)\ne 0$. If $\Im(x)\ne 0$, we choose $u=y+g\delta,v=t$, then $B'=\{u,v\}$ satisfies that $\Im(uv), \Im(vx)$ are non-zero.
\end{proof}

There are only finitely many $x\in I$ such that it can extend to a good basis. The proof of Lemma \ref{bode3} describes accurately a method to find all those elements.


\begin{lemma}
	\label{bode3} Suppose that $D$ is square-free and positive such that $-D\not \equiv 1\ (\text{mod  }4)$. Then there are finitely many elements $z\in I$ such that $z$ can extend to a good basis of $I$, up to similarity.
\end{lemma}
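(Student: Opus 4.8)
We work in the case $-D\not\equiv 1\pmod 4$, so $\delta=\sqrt{-D}$ and a general element of $I$ has the form $x=p+q\sqrt{-D}$ with $p,q\in\mathbb{Z}$ (subject to lying in $I$). The goal is to show only finitely many $z\in I$ can be extended to a good basis, up to similarity. By Proposition~\ref{theo3}(ii), any $z$ appearing in a good basis $\{z,w\}$ must satisfy $\left(\Im(z^2)\right)^2\le\vol^2(\Lambda_K(I))$, OR $z$ is the *other* basis vector whose $\Im$-square is bounded. The trouble, flagged in Remark~\ref{rmk1}, is that $\Im(z^2)=2pq\sqrt{D}$ vanishes whenever $p=0$ or $q=0$, so the bound $\left(\Im(z^2)\right)^2\le\vol^2$ alone admits infinitely many candidates (all purely real or purely imaginary elements). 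So the whole point is to control those degenerate $z$.

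**The approach.** My plan is to argue by contradiction / by exhaustion on the degenerate directions. Suppose $z\in I$ extends to a good basis $\{z,w\}$. By Proposition~\ref{theo3}(ii) applied to this basis, at least one of $\left(\Im(z^2)\right)^2$, $\left(\Im(w^2)\right)^2$ is $\le\vol^2(\Lambda_K(I))$. If it is $\left(\Im(z^2)\right)^2$ that is bounded and $\Im(z^2)\ne 0$, then $z=p+q\sqrt{-D}$ with $p,q\ne 0$ and $|4p^2q^2 D|\le\vol^2$, which bounds $|p|$ and $|q|$ and hence leaves only finitely many such $z$. The remaining danger is exactly the degenerate $z$ with $\Im(z^2)=0$, i.e. $p=0$ or $q=0$. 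For these I would use Lemma~\ref{bode1}: we may choose a basis $\{u,v\}$ of $I$ with $\Im(uv)\ne 0$ and $\Im(vz)\ne 0$, and then exploit the good-basis inequality \eqref{(1)} directly for $\{z,w\}$ rather than the derived bound (ii). The key observation is that for a degenerate $z$ (say purely real, $z=p$), the condition $F(z,w)\le 0$ forces $\Im(w^2)$ to be large relative to $\Im(zw)$, and one shows instead that $\left(\Im(w^2)\right)^2\le\vol^2$ with $\Im(w^2)\ne 0$, so that $w$ is a non-degenerate element in a bounded region; finitely many such $w$ each admit only finitely many partners $z$ completing a good basis (by Theorem~\ref{theo5}, at most two up to similarity). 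This swaps the role of $z$ and $w$ so that the bounded one is always the non-degenerate one.

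**Key steps in order.** First, fix a basis via Lemma~\ref{bode1} so that $\Im(uv)\ne 0$, guaranteeing \eqref{condition} and that $F$ is well-defined. Second, split into the cases $\Im(z^2)\ne 0$ and $\Im(z^2)=0$. In the first case, invoke Proposition~\ref{theo3}(ii) and solve the explicit inequality $4p^2q^2D\le\vol^2(\Lambda_K(I))$ to bound $p,q$, giving finiteness. Third, in the degenerate case, show that the good-basis condition $F(z,w)\le 0$ (equivalently $|\cos\theta_{T_\alpha B}|\le 1/2$ via Proposition~\ref{prop5}) together with $\Im(z^2)=0$ forces the partner $w$ to satisfy $\left(\Im(w^2)\right)^2\le\vol^2(\Lambda_K(I))$ with $\Im(w^2)\ne 0$; bound $w$ as before and then count the finitely many good bases through each such $w$ using Theorem~\ref{theo5}. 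Fourth, collect: every $z$ extendable to a good basis either lies in the explicitly bounded non-degenerate region or is the partner of such an element, so finitely many in total up to similarity.

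**Main obstacle.** The hard part will be the degenerate case. One must rule out that a purely real (or purely imaginary) $z$ pairs with infinitely many non-similar $w$'s, and more delicately that the \emph{collection} of degenerate $z$'s that arise is finite up to similarity. The cleanest route is to show that whenever $\Im(z^2)=0$ the inequality in Proposition~\ref{theo3}(ii) must be realized by the \emph{other} coordinate, i.e. $\left(\Im(w^2)\right)^2\le\vol^2$; this hinges on checking that a degenerate $z$ cannot itself be the minimizer in the bound $\min\{(\Im(z^2))^2,(\Im(w^2))^2\}\le\vol^2$ in a way that leaves $w$ unconstrained. Establishing that $\Im(w^2)\ne 0$ for the partner (so that bounding $\left(\Im(w^2)\right)^2$ actually bounds $w$) is the crux, and it is precisely what the choice of basis in Lemma~\ref{bode1} and the similarity reduction of Theorem~\ref{theo5} are designed to furnish.
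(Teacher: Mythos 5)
Your treatment of the non-degenerate case ($\Im(z^2)\ne 0$) is fine and is essentially the paper's Case 1 in different coordinates: the bound $\bigl(\Im(z^2)\bigr)^2\le\vol^2\left(\Lambda_K(I)\right)$ together with $\Im(z^2)\ne0$ bounds both integer coordinates of $z$, leaving finitely many candidates. The genuine gap is in the degenerate case, and it sits exactly where you yourself locate the crux. Your key claim --- that if $\Im(z^2)=0$ then $F(z,w)\le 0$ forces the partner $w$ to satisfy $\bigl(\Im(w^2)\bigr)^2\le\vol^2\left(\Lambda_K(I)\right)$ \emph{with} $\Im(w^2)\ne 0$ --- is false as stated. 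The inequality itself does hold in the mixed case (for $z=p$ real and $w=r+s\sqrt{-D}$ one computes $F(z,w)=\frac{1}{4}\left(4r^2s^2D-p^2s^2D\right)$ and $\vol^2=p^2s^2D$, so $F\le0$ is literally that inequality), but the non-vanishing of $\Im(w^2)$ does not: a degenerate $z$ can pair with a degenerate $w$. The basis $\{1,\sqrt{-D}\}$ of $\mathcal{O}_K$ appearing in Corollary \ref{uniqueWR} is precisely such a good basis with $\Im(z^2)=\Im(w^2)=0$, and there the bound $\bigl(\Im(w^2)\bigr)^2\le\vol^2$ is vacuous and constrains nothing. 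Since your finiteness count for degenerate $z$ routes entirely through ``finitely many non-degenerate bounded $w$, each with at most two partners by Theorem \ref{theo5}'', the degenerate--degenerate pairs escape the argument entirely. Lemma \ref{bode1} does not repair this: it only lets you choose the auxiliary basis $\{u,v\}$ with $\Im(uv)\ne0$ and $\Im(vx)\ne0$; it says nothing about the second vector of a good basis being non-degenerate.

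The paper closes exactly this hole by a more elementary device that your sketch never invokes. Writing $z=(at+cy)+cg\sqrt{-D}$ in the canonical basis, the degenerate cases are $c=0$ and $at+cy=0$, and in each of them the paper uses not the inequality $F\le0$ but the \emph{unimodularity} condition $ad-bc=\pm1$ required for $\{z,z'\}$ to be a $\mathbb{Z}$-basis of $I$: when $c=0$ this forces $a=\pm1$, and when $at+cy=0$ it forces $c\mid t$, so in both degenerate branches only finitely many pairs $(a,c)$ survive. To salvage your structure you would need to add an argument of this kind for the sub-case in which $z$ and all of its good-basis partners are simultaneously degenerate, and to actually prove (rather than assert) the implication $F(z,w)\le0\Rightarrow\bigl(\Im(w^2)\bigr)^2\le\vol^2\left(\Lambda_K(I)\right)$ in the mixed case.
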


\begin{proof}
	Fix $\left\{ t,y+g\sqrt{-D}\right\}$ the canonical basis of $I$ where $ 0\le y<t$, $g|t,y$ and $tg|(y^2+g^2D)$. An arbitrary element of $I$ has the form $z=(at+cy)+cg\sqrt{-D}$ for some $a,c\in \mathbb{Z}$. By Proposition \ref{theo3}, we only consider the existence of an extendable basis $\{z,z'\}$ with $\left(\Im(z^2)\right)^2\le \vol^2(\Lambda_K(I))$. It is equivalent to $\left|c(at+cy)\right|\le \dfrac{t}{2}$. There are three cases.

	\begin{enumerate}[\textbf{Case } 1:]
		\item Solve the inequation $0<|c(at+cy)|\le \dfrac{t}{2}$. There are finitely many pairs $(a,c)$ satisfying the inequalities.
		\item  If $c=0$, then $z=at$. Thus $z$ can be extended to a good basis if and only if there exists $z'=(bt+dy)+dg\sqrt{-D}$ such that $\{z,z'\}$ is a good basis of $I$. It occurs when $ad=\pm 1$. In other words, one has $a=\pm 1$.
		\item  If $c\ne 0$ and $at+cy=0$, then $z$ can be extended to a good basis if there exists an element $(bt+dy)+dg\sqrt{-D}$ such that $\left|\begin{matrix}
		-\dfrac{cy}{t}&b\\c&d
		\end{matrix}\right|=\pm 1$. It is equivalent to that $c(dy+bt)=\pm t$. Therefore $c$ is a divisor of $t$. Because $t\ne 0$, there are finitely many such pairs $(a,c)$.	\end{enumerate}
\end{proof}


If $-D\equiv 1\ (\text{mod }4)$, the canonical basis of an ideal $I$ of $\mathcal{O}_K$ with $K=\mathbb{Q}(\sqrt{-D})$ is $t,y+g\dfrac{1+\sqrt{-D}}{2}$ where $ 0\le y<t$, $g|t,y$. We can write an arbitrary element of $I$ as $\dfrac{2at+2 cy+cg}{2}+\dfrac{cg\sqrt{-D}}{2}$. By Proposition \ref{theo3}, if this element can be extended to a good basis of $I$, then $|(2at+2cy+cg)c|\le t$. By using an argument similar to the one in the proof of Lemma \ref{bode3}, one obtains the following lemma.

\begin{lemma}
	\label{bode4} Suppose that $D$ is a square-free and positive integer such that $-D \equiv 1\ (\text{mod }4)$. Then there are finitely many elements $z\in I$ such that $z$ can be extended to a good basis of $I$, up to similarity.
\end{lemma}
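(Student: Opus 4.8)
The plan is to mirror the strategy already carried out for the case $-D\not\equiv 1\pmod 4$ in Lemma \ref{bode3}, adapting each step to the ramified-versus-inert distinction forced by the shape of $\delta=\frac{1+\sqrt{-D}}{2}$. I would begin by fixing the canonical basis $\{t,\,y+g\delta\}$ of $I$ with $0\le y<t$ and $g\mid t,y$, and writing a general element as $z=at+c(y+g\delta)$ for $a,c\in\mathbb{Z}$. Substituting $\delta=\frac{1+\sqrt{-D}}{2}$ gives
\begin{align}\label{(10)}
z=\frac{2at+2cy+cg}{2}+\frac{cg\sqrt{-D}}{2},
\end{align}
which is exactly the form recorded just before the statement. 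From here the real part is $\Re(z)=\frac{2at+2cy+cg}{2}$ and the imaginary part is $\Im(z)=\frac{cg\sqrt{D}}{2}$, so that $\Im(z^2)=2\Re(z)\Im(z)=\frac{cg\sqrt{D}}{2}(2at+2cy+cg)$. By part (ii) of Proposition \ref{theo3}, any $z$ that extends to a good basis must satisfy $(\Im(z^2))^2\le \vol^2(\Lambda_K(I))$, and since $\vol(\Lambda_K(I))=\frac{tg\sqrt{D}}{2}$ (up to the standard normalization), this inequality simplifies to the clean bound $\lvert (2at+2cy+cg)\,c\rvert\le t$ already stated in the text.

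Next I would split into the same three cases as in Lemma \ref{bode3}, keyed on the two quantities $c$ and $2at+2cy+cg$ whose product is controlled. In Case~1, assume $c(2at+2cy+cg)\ne 0$; then $0<\lvert c(2at+2cy+cg)\rvert\le t$ forces both $\lvert c\rvert\le t$ and, for each such $c$, only finitely many $a$, so there are finitely many admissible pairs $(a,c)$. In Case~2, suppose $c=0$; then $z=at$, and $z$ extends to a good basis exactly when it extends to a $\mathbb{Z}$-basis, i.e.\ when there is $z'=bt+d(y+g\delta)$ with the change-of-basis determinant $ad=\pm 1$, giving $a=\pm 1$ and hence finitely many $z$. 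In Case~3, suppose $c\ne 0$ but $2at+2cy+cg=0$; the extendability condition $\det\begin{pmatrix} a & b\\ c & d\end{pmatrix}=\pm1$ then reads, after eliminating $a$ via the relation $2at=-c(2y+g)$, as a divisibility constraint showing that $c$ must divide a fixed nonzero multiple of $t$, again leaving finitely many pairs. Collecting the three cases yields finitely many $z$ up to similarity, which is the claim.

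The one genuinely delicate point is Case~3, and this is where I expect the main obstacle. The half-integer offset $\frac{cg}{2}$ coming from $\delta$ means the equation $2at+2cy+cg=0$ has integer solutions $(a,c)$ only under a parity condition on $cg$, so one must check that the elimination of $a$ is valid and track exactly which multiple of $t$ the quantity $c$ is forced to divide; carelessly dropping the factor of $2$ would either lose solutions or spuriously enlarge the bound. I would handle this by arguing at the level of $2z\in 2I$ (clearing denominators once at the start), so that every coefficient is an honest integer and the determinant computation for extendability is unambiguous, then translating the divisibility conclusion back to $z$. The remaining steps are routine case-bookkeeping and a direct appeal to Proposition \ref{theo3}(ii), so the proof reduces to verifying that the bound $\lvert(2at+2cy+cg)c\rvert\le t$ confines each of $a$ and $c$ to a finite range in Cases~1--3, which the text indicates can be done \emph{mutatis mutandis} from the proof of Lemma \ref{bode3}.
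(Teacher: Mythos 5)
Your proposal is correct and follows essentially the same route as the paper: the paper's own (very terse) justification of Lemma \ref{bode4} consists precisely of writing $z=\frac{2at+2cy+cg}{2}+\frac{cg\sqrt{-D}}{2}$, extracting the bound $|(2at+2cy+cg)c|\le t$ from Proposition \ref{theo3}(ii), and repeating the three-case analysis of Lemma \ref{bode3}. Your extra care in Case 3 (tracking the factor of $2$, which yields $c\mid 2t$ rather than $c\mid t$) is a welcome clarification of a detail the paper leaves implicit, but it is not a different argument.
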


From Lemma \ref{bode3} and Lemma \ref{bode4}, the following result is obtained.

\begin{theorem}\label{theo4}
	Let $I$ be a nonzero ideal of $\mathcal{O}_K$. Then there are finitely many elements in $I$ that can be extended to a good basis of $I$.
\end{theorem}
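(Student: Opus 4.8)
The plan is to obtain the theorem directly from the two preceding lemmas by splitting on the residue class of $-D$ modulo $4$. Every integer is either congruent to $1$ modulo $4$ or not, so the two conditions $-D \not\equiv 1 \pmod 4$ and $-D \equiv 1 \pmod 4$ are exhaustive and mutually exclusive; moreover they are exactly the two alternatives that determine the shape of the generator $\delta$ of $\mathcal{O}_K$, and hence the form of the canonical basis $\{t, y+g\delta\}$ of $I$ used in the lemmas. So the first and only structural step is to phrase the argument as this dichotomy.

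In the case $-D \not\equiv 1 \pmod 4$, I would invoke Lemma \ref{bode3}, which guarantees that only finitely many $z \in I$ can be extended to a good basis of $I$. In the complementary case $-D \equiv 1 \pmod 4$, the identical conclusion is supplied by Lemma \ref{bode4}. Since these two cases cover all admissible square-free positive $D$, the union of the two (finite) lists of extendable elements is finite, which is precisely the assertion of the theorem.

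The only point I would take care to stress is that the lemmas in fact deliver genuine finiteness of the set of extendable elements, not merely finiteness up to similarity, which is what the theorem demands. This is already visible in the proof of Lemma \ref{bode3}: the necessary condition $\left(\Im(z^2)\right)^2 \le \vol^2(\Lambda_K(I))$ furnished by Proposition \ref{theo3}(ii) reduces, for $z = (at+cy) + cg\sqrt{-D}$, to the integer inequality $|c(at+cy)| \le t/2$, and each of the three cases treated there (with $c(at+cy)\neq 0$; with $c=0$; with $c\neq 0$ and $at+cy=0$) leaves only finitely many pairs $(a,c)$, hence finitely many $z$. The analogous bound $|(2at+2cy+cg)c| \le t$ settles the case $-D \equiv 1 \pmod 4$ in Lemma \ref{bode4}. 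Because the substantive estimates are thus entirely contained in Lemmas \ref{bode3} and \ref{bode4}, there is no real obstacle at this stage; the theorem is a clean combination of the two lemmas, and the only thing worth verifying is that each lemma's conclusion is the unconditional finiteness required here.
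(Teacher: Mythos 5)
Your proof matches the paper's, which likewise obtains the theorem immediately by combining Lemma \ref{bode3} (for $-D\not\equiv 1 \pmod 4$) and Lemma \ref{bode4} (for $-D\equiv 1 \pmod 4$). Your added observation that the case analysis inside those lemmas actually yields unconditional finiteness (not merely finiteness up to similarity, as the lemmas are phrased) is a point the paper glosses over, and it is correctly justified by the bounds $|c(at+cy)|\le t/2$ and $|(2at+2cy+cg)c|\le t$.
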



From the proof of Lemma \ref{bode3} and Lemma \ref{bode4}, we can list all elements of $I$ which can be extended to a basis of $I$. The proofs of two lemmas also yield an explicit method to find those elements. We replace \textbf{Step 2} of strategy (in page 6) by \textit{listing all elements of $I$ which can be extended to a basis of $I$}. The next result gives us a more efficient method to compute these bases.


\begin{lemma}\label{bode5}
	Let $I$ be an integral ideal with the basis $\{u,v\}$. Assume that an element $x=au+cv\in I$, that can be extended to a basis $B=\{x, y\}$ with $y=bu+dv$. Then we have the following. \begin{enumerate}[(i)]
		\item If $a=0$, then we can choose $x=v, y=u+dv$ and hence \begin{align}\label{(9)}
		F_1(B)=&-\dfrac{\Im(x^2)}{2}d^2+\left(-\Im(uv)- \dfrac{\Im(x^2)}{2} \right)d-\dfrac{\Im(x^2)+\Im(u^2)+\Im(uv)}{2}\\
		\label{(10)}F_2(B)=&-\dfrac{\Im(x^2)}{2}d^2+\left(-\Im(uv)+ \dfrac{\Im(x^2)}{2} \right)d-\dfrac{\Im(x^2)+\Im(u^2)-\Im(uv)}{2}.
		\end{align} 
		\item If $a\ne 0$ and $ad-bc=1$, then \begin{align}
		 \label{(11)} a^2F_1(B)=&-\dfrac{\Im(x^2)}{2}b^2-\left[a\left(\dfrac{2\Im(uv)}{2}+ \dfrac{\Im(x^2)}{2} \right)+2c\dfrac{\Im(v^2)}{2}\right]b\\\nonumber&-a^2\left(\dfrac{\Im(x^2)}{2}+ \dfrac{\Im(uv)}{2}\right)-\dfrac{\Im(v^2)}{2}(1+ ac)\\\label{(12)} a^2F_2(B)=&-\dfrac{\Im(x^2)}{2}b^2-\left[a\left(\dfrac{2\Im(uv)}{2}- \dfrac{\Im(x^2)}{2} \right)+2c\dfrac{\Im(v^2)}{2}\right]b\\\nonumber&-a^2\left(\dfrac{\Im(x^2)}{2}- \dfrac{\Im(uv)}{2}\right)-\dfrac{\Im(v^2)}{2}(1- ac).
		\end{align}
	\end{enumerate}
\end{lemma}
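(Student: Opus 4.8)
The plan is to reduce everything to the explicit expressions \eqref{(8)} for $F_1$ and $F_2$, which for the basis $B=\{x,y\}$ read $F_1(B)=-\tfrac12\bigl(\Im(x^2)+\Im(y^2)+\Im(xy)\bigr)$ and $F_2(B)=-\tfrac12\bigl(\Im(x^2)+\Im(y^2)-\Im(xy)\bigr)$, and then to expand $\Im(y^2)$ and $\Im(xy)$ using the $\mathbb{R}$-bilinearity of the symmetric form $(z,w)\mapsto\Im(zw)$. The only inputs I will use are the expansions $\Im\bigl((pu+qv)^2\bigr)=p^2\Im(u^2)+2pq\,\Im(uv)+q^2\Im(v^2)$, the analogous bilinear expansion of $\Im\bigl((pu+qv)(ru+sv)\bigr)$, the defining relation $x=au+cv$, and the unimodularity of $\{x,y\}$.

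For part (i), I first note that extendability forces $\gcd(a,c)=\gcd(0,c)=1$, so $c=\pm1$; after normalizing signs I may take $x=v$, and the complementary vector is then $y=u+dv$ with $d\in\mathbb{Z}$ as the free parameter (the determinant $ad-bc=-1$ is admissible). Here $\Im(x^2)=\Im(v^2)$, while $\Im(y^2)=\Im(u^2)+2d\,\Im(uv)+d^2\Im(v^2)$ and $\Im(xy)=\Im(uv)+d\,\Im(v^2)$. Substituting these into the two displayed expressions for $F_1,F_2$ and collecting in powers of $d$ yields \eqref{(9)} and \eqref{(10)} directly.

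For part (ii) the key device is to trade $\Im(u^2)$ for $\Im(x^2)$: from $x=au+cv$ one gets $a^2\Im(u^2)=\Im(x^2)-2ac\,\Im(uv)-c^2\Im(v^2)$, and from $ad-bc=1$ one gets $ad=1+bc$, so all $d$-dependence can be eliminated. I will first establish the two intermediate identities $a^2\Im(y^2)=b^2\Im(x^2)+2ab\,\Im(uv)+(1+2bc)\Im(v^2)$ and $a^2\Im(xy)=ab\,\Im(x^2)+a^2\Im(uv)+ac\,\Im(v^2)$, each obtained by expanding $\Im(y^2)$ and $\Im(xy)$ bilinearly, multiplying through by $a^2$, and substituting the two relations above. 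Granting these, $a^2F_1(B)=-\tfrac12\bigl(a^2\Im(x^2)+a^2\Im(y^2)+a^2\Im(xy)\bigr)$ becomes a polynomial in $b$ whose coefficients, sorted by degree, are exactly \eqref{(11)}, and the sign change $\Im(xy)\mapsto-\Im(xy)$ gives \eqref{(12)}.

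The main obstacle is the bookkeeping in part (ii): after multiplying by $a^2$ and substituting the expression for $a^2\Im(u^2)$ and the relation $ad=1+bc$, the $\Im(u^2)$-terms and all $d$-dependence must cancel so that only $\Im(x^2)$, $\Im(uv)$ and $\Im(v^2)$ survive. I expect these cancellations to be the one genuinely delicate point, and the cleanest way to control them is to verify the two intermediate identities for $a^2\Im(y^2)$ and $a^2\Im(xy)$ in isolation before assembling $F_1$ and $F_2$, rather than expanding the whole expression at once.
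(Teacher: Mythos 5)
Your proposal is correct and follows essentially the same route as the paper: both start from the factorization \eqref{(8)} and expand $\Im(y^2)$ and $\Im(xy)$ bilinearly, the only difference being that the paper substitutes $ay=bx+v$ before expanding, while you eliminate $\Im(u^2)$ and $d$ afterwards via $a^2\Im(u^2)=\Im(x^2)-2ac\,\Im(uv)-c^2\Im(v^2)$ and $ad=1+bc$. Your two intermediate identities for $a^2\Im(y^2)$ and $a^2\Im(xy)$ check out and reproduce \eqref{(9)}--\eqref{(12)} exactly.
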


\begin{proof}
	\begin{enumerate}[(i)]
		\item From \eqref{(8)}, we have 
		\begin{align*}
		F_1(B)&=-\dfrac{1}{2}\left(\Im(x^2)+\Im(y^2)+\Im(xy)\right)=\dfrac{-\Im(x^2)}{2}-\dfrac{\Im((u+dv)^2)}{2}+\dfrac{\Im(v(u+dv))}{2}\\
		&=\dfrac{-\Im(x^2)}{2}-\dfrac{1}{2}\left(\dfrac{(u^2-\overline{u}^2)+d^2(v^2-\overline{v}^2)+2d(uv-\overline{uv})}{2i}\right)-\dfrac{1}{2}\dfrac{v(u+dv)-\overline{v(u+dv)}}{2i}\\
		&=\dfrac{-\Im(x^2)}{2}d^2-\left(\Im(uv)+\dfrac{\Im(x^2)}{2}\right)d-\dfrac{\Im(x^2)+\Im(u^2)+\Im(uv)}{2}.
		\end{align*}
		
		It is the result of \eqref{(9)}. By using a similar computation, we obtain the result in \eqref{(10)}.
		\item[(ii)] Here we have $ad-bc=1$, $d=\dfrac{1+bc}{a}$. Using equation \eqref{(8)} leads to the following.
		
	\end{enumerate}
	\begin{align*}
		a^2F_1(B)&=-\dfrac{a^2}{2}\left(\Im(x^2)+\Im(\left(bu+dv\right)^2)+\Im((au+cv)(bu+dv))\right)\\
		&=-\dfrac{a^2}{2}\left(\Im(x^2)+\Im\left(\left(bu+\dfrac{1+bc}{a}v\right)^2\right)+\Im\left((au+cv)\left(bu+\dfrac{1+bc}{a}v\right)\right)\right)\\
		&=-\dfrac{a^2}{2}\left[\Im(x^2)+\Im\left(bx+v\right)^2+a\Im\left((au+cv)(bx+v)\right)\right]\\
		&=-\dfrac{a^2\Im(x^2)}{2}-\dfrac{\Im(x^2)}{2}b^2-\dfrac{\Im(v^2)}{2}-\dfrac{2b\Im\left((au+cv)v\right)}{2}-\dfrac{ab\Im(x^2)}{2}-\dfrac{a^2\Im(uv)+ac\Im(v^2)}{2}\\
		&=-\dfrac{\Im(x^2)}{2}b^2-\left[a\left(\Im(uv)+\dfrac{\Im(x^2)}{2}\right)+c\Im(v^2) \right]b-a^2\left(\dfrac{\Im(x^2)}{2}+\dfrac{\Im(uv)}{2}\right)-\dfrac{\Im(v^2)}{2}(1+ac).
		\end{align*}
		Thus \eqref{(11)} is proved. The result in \eqref{(12)} can be obtained by using a similar computation.
\end{proof}


When $\Im(x^2)\ne 0$, the right sides of \eqref{(9)} and \eqref{(10)} are degree two polynomials in $d$ with the same discriminants. Indeed, we have (note that $v=x$)
\begin{align}
\nonumber \Delta_{F_1(B)}&=\left(-\Im(uv)-\dfrac{\Im(x^2)}{2}\right)^2-4\dfrac{\Im(x^2)}{2}\left(\dfrac{\Im(x^2)+\Im(u^2)+\Im(uv)}{2} \right)\\
\nonumber&=-\dfrac{3\left(\Im(x^2)\right)^2}{4}+\Im(uv)^2-\Im(x^2)\Im(u^2)=-\dfrac{3\Im(x^2)}{4}+\dfrac{(uv-\overline{uv})^2-(u^2-\overline{u}^2)(v^2-\overline{v}^2)}{(2i)^2}\\
\label{(13)}&=\left(\dfrac{u\overline{v}-v\overline{u}}{2i}\right)^2-\dfrac{3\left(\Im(x^2)\right)^2}{4}=\vol^2\left(\Lambda_K(I)\right)-\dfrac{3\left(\Im(x^2)\right)^2}{4}.
\end{align}

Similarly, one obtains that $F_2(B)$ has the same discriminant as of $F_1(B)$. 
Analogously, the discriminants of polynomials (in $b$) on the right side of \eqref{(11)} and \eqref{(12)}  are 
\begin{align}\label{(15)}
 &a^2\left(\vol^2\left(\Lambda_K(I)\right)-\dfrac{3\left(\Im(x^2)\right)^2}{4}\right).
\end{align}
The next result is an analogy to Theorem 3 in \cite{damir2019well}.


\begin{theorem}\label{theo5}
	Let $I$ be an ideal of $\mathcal{O}_K$ and let $x \in I$ such that $\left(\Im(x^2)\right)^2\le \vol^2(\Lambda_K(I)) $. Then $x$ can be extended to at most two good bases of $I$, up to similarity.
\end{theorem}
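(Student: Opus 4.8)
The plan is to fix $x\in I$ and to parametrize every completion of $x$ to a basis $\{x,y\}$ of $I$ by writing $y=y_0+t x$ with $t\in\mathbb{Z}$, where $\{x,y_0\}$ is one fixed such completion. Indeed, if $\{x,y\}$ and $\{x,y_0\}$ are both bases then $y=\pm y_0+t x$ for some $t\in\mathbb{Z}$; changing the sign of a basis vector leaves the lattice (hence its twist) unchanged, and by Proposition \ref{theo3}(iii) this and the swap of the two vectors do not affect the similarity class. So it suffices to count, up to similarity, the integers $t$ for which $\{x,y_0+tx\}$ is good, i.e. for which $F(\{x,y_0+tx\})\le0$ by Proposition \ref{theo3}(i).

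First I would record the shape of $F_1$ and $F_2$ as functions of $t$. By Lemma \ref{bode5} (cases (i) or (ii) according to whether the leading coefficient of $x$ in the fixed basis vanishes), after the substitution $y=y_0+tx$ both $F_1$ and $F_2$ are quadratic in $t$ with the common leading coefficient $-\tfrac12\Im(x^2)$, and by \eqref{(13)} (equivalently \eqref{(15)}) with the common discriminant $\Delta=\vol^2(\Lambda_K(I))-\tfrac34\bigl(\Im(x^2)\bigr)^2$. Moreover the universal identity $F_1-F_2=-\Im(xy)$ from \eqref{(8)}, together with $\Im(xy)=\Im(xy_0)+t\,\Im(x^2)$, shows that $F_1-F_2$ is affine in $t$ with slope $-\Im(x^2)$; combined with the equal leading coefficients and the equal discriminants this forces the exact translation identity $F_2(t)=F_1(t-1)$. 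The hypothesis $\bigl(\Im(x^2)\bigr)^2\le\vol^2(\Lambda_K(I))$ gives $\Delta\ge\tfrac14\vol^2(\Lambda_K(I))\ge\tfrac14\bigl(\Im(x^2)\bigr)^2\ge0$, so when $\Im(x^2)\neq0$ the polynomial $F_1$ has two real roots $p_1<p_2$ with $p_2-p_1=2\sqrt{\Delta}/|\Im(x^2)|\ge1$, and the roots of $F_2$ are exactly $p_1+1<p_2+1$.

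Next I would determine the solution set of $F=F_1F_2\le0$. Using the interlacing $p_1<p_1+1\le p_2<p_2+1$ and noting that the sign of $\Im(x^2)$ only flips the common orientation of the two parabolas, a direct sign scan gives $\{t\in\mathbb{R}:F(t)\le0\}=[p_1,p_1+1]\cup[p_2,p_2+1]$, a union of two closed intervals of length one. (When $\bigl(\Im(x^2)\bigr)^2=\vol^2(\Lambda_K(I))$ these merge into the single interval $[p_1,p_1+2]$, and when $\Im(x^2)=0$ the functions $F_1,F_2$ are affine with roots one apart, so $\{F\le0\}$ is a single interval of length one; both degenerations are handled by the same bookkeeping below.) An interval of length one contains at most two integers, and it contains two only when its two endpoints are integers. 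But the endpoints of these intervals are precisely the roots of $F_1$ and of $F_2$, where $F=0$; hence at any such surplus integer the basis $\{x,y\}$ satisfies $F(\{x,y\})=0$, so by Proposition \ref{prop6} its twist is the hexagonal lattice.

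Finally I would assemble the count. Each length-one interval contributes either a single interior integer, giving one good basis, or its two integer endpoints, both of which give $F=0$ and hence hexagonal twists; since all hexagonal lattices are similar (they all correspond to $\bigl|\cos\theta_{T_\alpha B}\bigr|=\tfrac12$ by Proposition \ref{prop6}, so they form a single class by Lemma \ref{similarwr}), all endpoint bases collapse to one similarity class. A short case check over the four possibilities for whether $p_1,p_2$ are integers then shows the total number of similarity classes is at most two in every configuration, and the same argument covers the degenerate length-two and affine cases. Imposing the admissibility condition \eqref{condition} can only delete solutions, so it does not affect the bound. I expect the main obstacle to be the first step: pushing the formulas of Lemma \ref{bode5} through the reparametrization $y=y_0+tx$ to extract cleanly the common leading coefficient, the common discriminant \eqref{(13)}, and above all the translation identity $F_2(t)=F_1(t-1)$, since it is this exact shift by one that pins the two solution intervals to length one and thereby forces the hexagonal lattice at any surplus integer point.
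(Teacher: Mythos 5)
Your proposal is correct, and its engine is the same as the paper's: factor $F=F_1F_2$, observe that $F_1,F_2$ are quadratics in the free parameter with equal leading coefficient and equal discriminant $\Delta=\vol^2(\Lambda_K(I))-\tfrac34\left(\Im(x^2)\right)^2$, deduce that $\{F\le 0\}$ is a union of two closed intervals whose integer points are either a unique interior point or boundary points, and collapse all boundary points into one similarity class because $F=0$ forces a hexagonal twist (Proposition \ref{prop6}). Where you genuinely differ is the parametrization of completions: the paper writes $x=au+cv$, $y=bu+dv$ relative to an auxiliary basis supplied by Lemma \ref{bode1} and must split into the cases $a=0$ and $a\ne 0$, in the latter working with intervals of width $a$ and counting solutions of the congruence $bc+1\equiv 0 \pmod a$; you instead fix one completion $y_0$ and write $y=\pm y_0+tx$, which makes every integer $t$ admissible, merges the two cases into one, and isolates the clean identity $F_2(t)=F_1(t-1)$ that in the paper appears only implicitly as $\beta_{2i}=\beta_{1i}+1$ (resp.\ $+a$) in \eqref{(20)}, \eqref{(22)}, \eqref{(28)}, \eqref{(30)}. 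This buys a shorter and more transparent proof of the counting statement; the cost is that the paper's coordinates $(a,c,b,d)$ are exactly what feed the explicit formulae of Section \ref{sec_alg} and Algorithms \ref{algo1} and \ref{algo2}, so your version would have to be translated back for the algorithmic part. Two details to nail down: in the degenerate case $\Im(x^2)=0$ you need the common slope $-\Im(xy_0)$ of the two affine functions to be nonzero (it is, since $\Im(x^2)=0$ forces $x$ to be real or purely imaginary and a basis mate $y_0$ then cannot be of the same type; this is precisely the role of the condition $\Im(vx)\ne 0$ from Lemma \ref{bode1} in the paper), and the inference of $F_2(t)=F_1(t-1)$ from equal leading coefficients, equal discriminants and the slope of $F_1-F_2$ is valid but is most safely confirmed by the one-line direct expansion of $F_1(t-1)$.
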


\begin{proof}
	Let us fix a basis $\{u, v\}$ of $I$ such that $\Im(uv)\ne 0$ and $\Im(vx)\ne 0$. Such a basis exists by Lemma \ref{bode1}. The condition $\Im(vx)\ne 0$ implies that $a\Im(uv)+c\Im(v^2)\ne 0$. Express $x = au+cv$ for some $a, c \in \mathbb{Z}$, and suppose that $y = bu + dv$ for some $b, d \in \mathbb{Z}$ and $\{x, y\}$
	is a good basis. We will employ the inequality $F (x, y) \le 0$ and the equality $ad - bc = \pm 1$ to solve for all possible $b$ and $d$. We consider two cases.
	\begin{itemize}
		\item \textbf{Case 1:} If $a=0$ , then $ad - bc = \pm 1$. It implies that $b = \pm 1$ and $c = \pm 1$, so $x = \pm v$ and $y = \pm u + dv$. By possibly replacing $x$ with $-x$ and $y$ with $-y$, which does not change the similarity class of the given basis, we may assume that our basis $\{x, y\}$ is of the form $\{x, y\} = \{v, u + dv\}$. We will show that there are at most two integers d such that this is a good basis.\\
		Let $f_i(d) = F_i(x, y)$ and $F_i(x, y)$ are as in \eqref{(9)} and \eqref{(10)}. By Proposition \ref{theo2}, we must find all $d$ such that $f_1(d)$ and $f_2(d)$ have opposite signs, or such that at least one of them are zero. Using \eqref{(9)} and \eqref{(10)}, we divide our proof into 2 cases.
		
		\begin{itemize}
			\item \textbf{Case 1.1.} If $\Im(x^2)=0$, the functions in \eqref{(9)} and \eqref{(10)} become \begin{align*}
	    f_1(d)&=-\Im(uv)d-\dfrac{\Im(u^2)+\Im(uv)}{2} \ \text{and}\\
		f_2(d)&=-\Im(uv)d-\dfrac{\Im(u^2)-\Im(uv)}{2}.
			\end{align*}
			There are at least one of $f_1(d)$ and $f_2(d)$ which are equal to zero if and only if $d=\beta_1$ or $d= \beta_2$ where
			\begin{align}
			\label{(18)}\beta_1=\dfrac{\Im(u^2)+ \Im(uv)}{-2\Im(uv)} \text{ and }	\beta_2=\dfrac{\Im(u^2)- \Im(uv)}{-2\Im(uv)}.
			\end{align} 
			In addition, $f_1(d)$ and $f_2(d)$ have opposite signs if $d\in \left(\beta_1, \beta_2\right)$. Therefore, $d\in\left[\beta_1,\beta_2\right]$. However, $\beta_2-\beta_1=1$, there are at most two values of $d$ satisfying the condition.
			
			\item \textbf{Case 1.2.} If $\Im(x^2)\ne 0$, the functions in \eqref{(9)} and \eqref{(10)} are second degree polynomials with the same discriminants $\Delta=\vol^2\left(\Lambda_K(I)\right)-\dfrac{3\left(\Im(x^2)\right)^2}{4}$ by \eqref{(13)}. The polynomial $f_1(d)$ has two roots 
			\begin{align}\label{(20)}
			\beta_{11}=\dfrac{-\left(-\Im(uv)- \dfrac{\Im(x^2)}{2} \right)+ \sqrt{\Delta}}{-\Im(x^2)},\beta_{12}=\dfrac{-\left(-\Im(uv)- \dfrac{\Im(x^2)}{2} \right)- \sqrt{\Delta}}{-\Im(x^2)}.
			\end{align} The polynomial $f_2(d)$ has two roots 
			\begin{align}
			\label{(22)}
			\beta_{21}=\dfrac{-\left(-\Im(uv)+\dfrac{\Im(x^2)}{2} \right)+ \sqrt{\Delta}}{\Im(x^2)},\beta_{22}=\dfrac{-\left(-\Im(uv)+\dfrac{\Im(x^2)}{2} \right)- \sqrt{\Delta}}{-\Im(x^2)}.
			\end{align}
			There are at least one of $f_1(d)$ and $f_2(d)$ which are equal to zero if and only if $d\in\left\{\beta_{11},\beta_{12},\beta_{21},\beta_{22}\right\}$. In these cases, we have $F(B)=f_1(d) f_2(d)=0$, then one obtains four hexagonal twist lattices (by Proposition \ref{prop6}). Therefore, they are all similar. Thus there is at most one $d$, up to similarity. In addition, $f_1(d)$ and $f_2(d)$ have opposite signs if and only if $d\in \left(\beta_{11}, \beta_{21}\right)=J_1$ or $d\in \left(\beta_{12},\beta_{22}\right)=J_2$. These open intervals have width one. As a result they only contain at most one integer. Hence, there are at most two $d$ satisfying the condition.    	
		\end{itemize}

		\item \textbf{Case 2:} If $a\ne 0$, then $d=\dfrac{1+bc}{a}$. Multiplying by $-1$ if necessary
		we may assume $a > 0$. We again explicitly compute all $y = bu + dv \in I$ such that $\{x, y\}$ is a good basis of I, up to similarity. By possibly replacing $y$ with $-y$ we may assume $ad -bc = 1$, and solve for $d$ in terms of $b$ as $d = \dfrac{1 + bc}{a}$. Setting $f_i(b) = F_i(x, y)$, we
		wish to find all integers $b$ such that $d = \dfrac{1 + bc}{a} \in \mathbb{Z}$, and that either $f_1(b)$ and $f_2(b)$ have opposite signs or such that at least one of them are zero. From \eqref{(11)} and \eqref{(12)}, one can consider two cases as below.
	
		\begin{itemize}
			\item \textbf{Case 2.1.} If $\Im(x^2)=0$, the functions in \eqref{(11)} and \eqref{(12)} become 
			\begin{align*}
				a^2f_1(b)=&-\left(a\Im(uv)+c\Im(v^2)\right)b-a^2\left(+ \dfrac{\Im(uv)}{2}\right)-\dfrac{\Im(v^2)}{2}(1+ ac) \text{ and}\\
			a^2f_2(b)=&-\left(a\Im(uv)+c\Im(v^2)\right)b-a^2\left(- \dfrac{\Im(uv)}{2}\right)-\dfrac{\Im(v^2)}{2}(1- ac).
			\end{align*}
			The condition $a\Im(uv)+c\Im(v^2)\ne 0$ follows that $a^2f_1(b)$ and $a^2f_2(b)$ are linear polynomials in $b$. The roots of $a^2f_1(b)$ and $a^2f_2(b)$ are respectively
			\begin{align}\label{(23)}
			    \beta_1=\dfrac{a^2\left( \dfrac{\Im(uv)}{2}\right)+\dfrac{\Im(v^2)}{2}(1+ ac)}{-\left(a\Im(uv)+c\Im(v^2)\right)}\text{ and }\beta_2=\dfrac{a^2\left( \dfrac{-\Im(uv)}{2}\right)+\dfrac{\Im(v^2)}{2}(1- ac)}{-\left(a\Im(uv)+c\Im(v^2)\right)}.
			\end{align} 
			There are at least one of $f_1(b)$ and $f_2(b)$ which are equal to zero if and only if $b=\beta_1$ or $b=\beta_2$. In addition, $a^2f_1(b)$ and $a^2f_2(b)$ have opposite signs if and only if $b \in [\beta_1,\beta_2]=J$. The interval $J$ has width $a$, so the equation $bc+1\equiv 0 \mod a$ has at most one solution in $J$. Therefore, there are at most two pairs $(b,d)$ satisfying the above condition as we expect.
			
			\item \textbf{Case 2.2.} If $\Im(x^2)\ne 0$, the functions in \eqref{(9)} and \eqref{(10)} are second degree polynomials with the same discriminants $\Delta=a^2\left(\vol^2\left(\Lambda_K(I)\right)-\dfrac{3\left(\Im(x^2)\right)^2}{4}\right)$ by \eqref{(15)}. Then the polynomial $a^2f_1(b)$ has two roots 
			\begin{align}
			\label{(28)}\beta_{11}&=\dfrac{\left[a\left(\Im(uv)+\dfrac{\Im(x^2)}{2}\right)+c\Im(v^2)+ \sqrt{\Delta}\right]}{-\Im(x^2)}\text{ and }\\\nonumber
			\beta_{12}&=\dfrac{\left[a\left(\Im(uv)+\dfrac{\Im(x^2)}{2}\right)+c\Im(v^2)- \sqrt{\Delta}\right]}{-\Im(x^2)}.
			\end{align}
			The polynomial $a^2f_2(b)$ has two roots 
			\begin{align}
			\label{(30)}\beta_{21}&=\dfrac{\left[a\left(\Im(uv)-\dfrac{\Im(x^2)}{2}\right)+c\Im(v^2)+ \sqrt{\Delta}\right]}{-\Im(x^2)}\text{ and}\\\nonumber
			\beta_{22}&=\dfrac{\left[a\left(\Im(uv)-\dfrac{\Im(x^2)}{2}\right)+c\Im(v^2)- \sqrt{\Delta}\right]}{-\Im(x^2)}.
			\end{align}
			There are at least one of $a^2f_1(b)$ and $a^2f_2(b)$ which are equal to zero if and only if $b\in\left\{\beta_{11},\beta_{12},\beta_{21},\beta_{22}\right\}$. In these cases, we have four hexagonal twist lattices (by Proposition \ref{prop6}) and therefore, they are similar. Moreover, there is at most one $d$, up to similarity. In addition, $a^2f_1(b)$ and $a^2f_2(b)$ have opposite signs if and only if $d\in \left(\beta_{11}, \beta_{21}\right)=J_1$ or $d\in \left(\beta_{12},\beta_{22}\right)=J_2$. This open intervals have width $a$, so they contain at most one integer which is a solution of $bc+1\equiv 0\ \mod a$. Hence, there are at most two expected pairs $(b,d)$. 
		\end{itemize}
	\end{itemize}
\end{proof}


If  $-D\not\equiv 1 \mod 4$, the ring of integers $O_K$ of  $K=\mathbb{Q}(\sqrt{-D})$ has the canonical basis $\{u=1,v=\sqrt{-D}\}$. Using the proof of Theorem \ref{theo5}, one can show that $O_K$ only has the following good bases $x = au+cv$, $y = bu + dv$ for $(a, c, b, d) \in \{(1,0,0,1),(1,0,0,-1),(0,1,1,0)\}$. Since all well-rounded lattices defined by these bases are similar, one obtains that $O_K$ has only one well-rounded twist up to similarity. We have the following corollary that is an analogy with the result of Corollary 3 in \cite{damir2019well}.

\begin{corollary}\label{uniqueWR}
Let $D$ be a square-free integer such that $-D\not \equiv 1 \pmod 4$, and let $K=\mathbb{Q}(\sqrt{-D})$. Then the lattice $\Lambda_K$ has a unique well-rounded twist, which is an orthogonal lattice, up to similarity.
\end{corollary}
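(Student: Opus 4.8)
The plan is to pin down the canonical basis of $\mathcal{O}_K$, read off the relevant imaginary parts, and then combine the norm bound of Proposition~\ref{theo3}(ii) with the extendability criterion $\gcd(a,c)=1$ to cut the candidate vectors down to a finite list; finally I would solve $F(x,y)\le 0$ in each case following the proof of Theorem~\ref{theo5}. First I would fix the canonical basis $\{u=1,\,v=\sqrt{-D}\}$. Since $u$ is real and $v$ is purely imaginary, both $u^2=1$ and $v^2=-D$ are real, so $\Im(u^2)=\Im(v^2)=0$, whereas $\Im(uv)=\sqrt{D}\ne 0$; computing the generating matrix gives $\vol^2(\Lambda_K)=D$. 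Writing a general element as $x=au+cv=a+c\sqrt{-D}$, one has $\Im(x^2)=2ac\sqrt{D}$, hence $\left(\Im(x^2)\right)^2=4a^2c^2D$.

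Next I would apply the necessary condition of Proposition~\ref{theo3}(ii): if $x$ extends to a good basis then $\left(\Im(x^2)\right)^2\le\vol^2(\Lambda_K)=D$, that is $4a^2c^2\le 1$, which forces $ac=0$ because $a,c\in\mathbb{Z}$. This bound alone does not conclude the argument, since (as Remark~\ref{rmk1} warns) the norm inequality has infinitely many solutions once $\Im(x^2)=0$. The remedy is to combine it with the extendability criterion $\gcd(a,c)=1$ recorded just before Step~1: if $c=0$ then $|a|=1$, and if $a=0$ then $|c|=1$. Up to sign this leaves exactly the two candidate vectors $x=u$ and $x=v$.

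Finally, for each candidate I would solve $F(x,y)\le 0$ via the proof of Theorem~\ref{theo5}. Because $\Im(x^2)=0$ in both cases, we land in the degenerate linear Cases~1.1 and~2.1, in which $f_1$ and $f_2$ are linear in the free coordinate. A short computation with the formulas of Lemma~\ref{bode5} shows $F(x,y)=\tfrac{D}{4}\left(4k^2-1\right)$ for an integer $k$ (with $k=b$ when $x=u$ and $k=d$ when $x=v$), so $F(x,y)\le 0$ forces $k=0$. This produces precisely the good bases $(a,c,b,d)\in\{(1,0,0,1),(1,0,0,-1),(0,1,1,0)\}$. Since $\Im(u^2)+\Im(v^2)=0$, Proposition~\ref{prop5} gives $\cos\theta_{T_\alpha B}=0$, so each resulting well-rounded twist is orthogonal; moreover the three bases differ only by a sign change of $y$ or by the swap of $u$ and $v$, so by Proposition~\ref{theo3}(iii) and Lemma~\ref{similarwr} they all define one and the same lattice up to similarity. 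The step requiring the most care is establishing completeness of the candidate list, namely marrying the norm bound to the gcd condition so as to exclude the infinite families flagged in Remark~\ref{rmk1}; the individual verifications that $F\le 0$ forces $k=0$ are then routine.
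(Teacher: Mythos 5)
Your proposal is correct and follows essentially the same route as the paper: fix the canonical basis $\{1,\sqrt{-D}\}$, use $\left(\Im(x^2)\right)^2=4a^2c^2D\le\vol^2(\Lambda_K)=D$ together with $\gcd(a,c)=1$ to reduce to $(a,c)\in\{(1,0),(0,\pm1)\}$, and then apply Theorem \ref{theo5} to see that each candidate extends only to the bases $(1,0,0,\pm1)$ and $(0,\pm1,\pm1,0)$, all yielding the orthogonal twist. Your explicit computation $F(x,y)=\tfrac{D}{4}(4k^2-1)$ and the orthogonality check via Proposition \ref{prop5} merely spell out details the paper delegates to the discussion preceding the corollary, so there is no substantive difference.
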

\begin{proof}
The canonical basis of $\mathcal{O}_K$ is $\left\{1,\sqrt{-D}\right\}$ and $\vol(\Lambda_K)=\sqrt{D}$. Suppose that $x=a+c\sqrt{-D}$ can be extended to a good basis. By Proposition \ref{theo3}, it is sufficient to consider the case in which  $|ac|\le \dfrac{1}{2},(a,c)=1$ and $a\ge 0$. It implies $(a,c)\in \left\{(0,1),(0,-1),(1,0)\right\}$.
\begin{itemize}
    \item If $(a,c)=(0,1)$ then $x=\sqrt{-D}$. By Theorem \ref{theo5}, the basis to which $x$ extends is $\{\sqrt{-D},1 \}$. The well-rounded twist lattice of this basis is orthogonal.
    \item If $(a,c)=(0,-1)$, the result is the same with the case $(a,c)=(0,1)$. 
    \item If $(a,c)=(1,0)$ then $x=1$, by Theorem \ref{theo5}, the basis to which $x$ extends is $\{1,\sqrt{-D} \}$. Thus the well-rounded twist lattice of this basis is orthogonal.
\end{itemize}
Therefore, for all cases,  $O_K$ has a unique well-rounded twist which is similar to an orthogonal lattice, up to similarity.
\end{proof}

\begin{corollary}
\label{coro2} Let $D$ be a square-free integer such that $-D \equiv 1 \pmod 4$, and let $K=\mathbb{Q}(\sqrt{-D})$.  Then the lattice $\Lambda_K$ has a unique well-rounded twist, which is a hexagonal lattice, up to similarity.
\end{corollary}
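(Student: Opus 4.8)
The plan is to mirror the proof of Corollary \ref{uniqueWR}, with the orthogonal lattice replaced by the hexagonal one. The canonical basis of $\mathcal{O}_K$ is now $\{1,\delta\}$ with $\delta=\frac{1+\sqrt{-D}}{2}$, and computing the generating matrix gives $\vol(\Lambda_K)=\frac{\sqrt{D}}{2}$. First I would settle existence directly on this basis. Taking $u=1$, $v=\delta$ one finds $\Im(u^2)=0$, $\Im(v^2)=\frac{\sqrt D}{2}$ and $\Im(uv)=\frac{\sqrt D}{2}$, so that $\Im(u^2)+\Im(v^2)=\Im(uv)$. Hence $F_2(\{1,\delta\})=0$, i.e. $F(\{1,\delta\})=0$, and by Proposition \ref{prop5} the twisting angle satisfies $\left|\cos\theta_{T_\alpha B}\right|=\frac12$. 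By Proposition \ref{prop6} the canonical basis already produces a hexagonal twist, so the hexagonal lattice is a twist of $\Lambda_K$.

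For uniqueness I would show that \emph{every} good basis satisfies $F=0$; then by Proposition \ref{prop6} every well-rounded twist is hexagonal, and since all hexagonal lattices lie in a single similarity class this gives the claim. Let $\{u,v\}$ be any good basis. By Proposition \ref{theo3}(ii) at least one of its vectors, say $x=a+c\delta$ (normalised with $a\ge 0$, and with $\gcd(a,c)=1$ since $x$ extends to a basis), satisfies $\left(\Im(x^2)\right)^2\le\vol^2(\Lambda_K)$. Writing $x=(a+\tfrac c2)+\tfrac c2\sqrt{-D}$ this inequality becomes $|(2a+c)c|\le 1$, exactly the bound appearing just before Lemma \ref{bode4}. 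Solving this together with $\gcd(a,c)=1$ and $a\ge0$ leaves only the finitely many elements $x\in\{1,\ \delta,\ \overline{\delta},\ \sqrt{-D}\}$, up to sign.

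The core of the argument is then to run through these candidates. For each fixed $x$ I would parametrise a completing vector $y=m+n\delta$, impose the basis condition (the coordinate determinant relative to $\{1,\delta\}$ equal to $\pm1$), and substitute into the factorisation $F=F_1F_2$ of \eqref{(8)}. In each case $F(\{x,y\})$ comes out as a constant multiple of a product of linear factors in the single remaining integer parameter, all of whose roots are integers lying within a bounded interval; imposing $F\le 0$ confines the parameter to that interval, and the only integers it contains turn out to be roots of $F$, so $F=0$ at every admissible value. Complex conjugation is an isometry of $\mathbb{R}^2$, fixes $F$, and sends $\delta$ to $\overline{\delta}$; it therefore identifies the $\overline{\delta}$-case with the $\delta$-case and the $\sqrt{-D}$-case with its negative, so only $x=1$, $x=\delta$ and $x=\sqrt{-D}$ require a genuine check.

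The substitutions in this case analysis are routine but must be carried out carefully, and this bookkeeping is the only real obstacle. It is also precisely where the imaginary-quadratic situation differs from the real one: here the admissible integers never land strictly inside the region $F<0$, so the boundary value $F=0$ is \emph{forced} rather than merely attained. Having shown that every good basis yields $F=0$, I would conclude by Proposition \ref{prop6}, together with the uniqueness of the hexagonal similarity class, that $\Lambda_K$ has a unique well-rounded twist, which is hexagonal.
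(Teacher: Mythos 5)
Your proposal is correct and follows essentially the same route as the paper: reduce via Proposition \ref{theo3}(ii) to the candidates $(a,c)\in\{(1,0),(0,1),(1,-1),(1,-2)\}$ (your list $\{1,\delta,\overline{\delta},\sqrt{-D}\}$ up to sign), verify case by case that $F$ vanishes on every admissible completion, and conclude hexagonality from Proposition \ref{prop6}. The only additions are cosmetic — the explicit existence check on the canonical basis and the conjugation symmetry that trims the case analysis — and your computations (e.g.\ $F\propto b(b+1)$ for $x=1$, $F\propto d(d+1)^2(d+2)$ for $x=\delta$) do confirm the paper's claim that $F=0$ at all good bases.
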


\begin{proof}
The canonical basis of $\mathcal{O}_K$ is $\left\{1,\dfrac{1+\sqrt{-D}}{2}\right\}$ and $\vol(\Lambda_K)=\dfrac{\sqrt{D}}{2}$. Suppose that $x=a+c\left(\dfrac{1+\sqrt{-D}}{2}\right)$ can be extended to a good basis. By Proposition \ref{theo3}, it is sufficient to consider the case in which  $|(2a+c)c|\le 1,(a,c)=1$ and $a\ge 0$. It implies $(a,c)\in \left\{(1,0),(1,-2),(0,1),(1,-1)\right\}$. By using a similar argument as the one of Theorem \ref{theo5}, we can compute all tuples $(a,c,b,d)$ such that $F(au+cv, bu+dv)\le 0$, where $u=1, v=\dfrac{1+\sqrt{-D}}{2}$. One can easily checks that the value of $F$ at all bases are equal to zero. Therefore, these bases have only one  well-rounded twist lattice, which is hexagonal, up to similarity.
\end{proof}

\begin{remark}
    Our results on well-rounded twist lattices of the ring of integers $O_K$ (Corollaries \ref{uniqueWR} and \ref{coro2}) are more general compared to Lemma 2.2 in \cite{FukIdeal} which states that the lattices $O_K$ is well-rounded (without twisting) if and only if $D=1,3$. Indeed,  when $D=1$ then $O_K =\mathbb{Z}[i]$ which  is well-rounded and orthogonal, and is a particular case of Corollary \ref{uniqueWR}. When $D=3$, then $O_K = \mathbb{Z}\left[\dfrac{1+\sqrt{-3}}{2}\right]$ which is well-rounded and hexagonal, and is a particular case of Corollary \ref{coro2}.
\end{remark}

Now let $I$ be an integral ideal of $K= \mathbb{Q}(\sqrt{-D})$ with the canonical basis $\left\{t,y+g\delta \right\}$. In case $y \ne 0$, we can easily apply a similar argument as in the proof of Theorem \ref{theo5} to find upper bounds for the number of well-rounded twists of $I$ which are presented in Corollaries 3 and 4 as below. Note that these results may not be true for real quadratic fields and a similar result has not been proved in \cite{damir2019well}.

\begin{corollary}\label{numbertwists}
Let $D$ be a squarefree integer with $-D\not \equiv 1 \pmod 4$ and let  $I$ be an ideal of $\mathbb{Q}(\sqrt{-D})$ with the canonical basis  $\left\{t,y+g\delta \right\}$. Then $I$ has at most  $6+2\left[\dfrac{y+1}{2}\right]$ well-rounded twists.
\end{corollary}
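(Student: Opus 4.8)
The plan is to convert the statement into a counting problem for lattice elements and then apply Theorem \ref{theo5}. A well-rounded twist of $I$, up to similarity, is exactly prescribed by a good basis of $I$ up to similarity, so it suffices to bound the number of good bases. By part (ii) of Proposition \ref{theo3}, every good basis $\{x,y\}$ contains a vector, say $x$, with $\left(\Im(x^2)\right)^2 \le \vol^2(\Lambda_K(I))$, and by Theorem \ref{theo5} each such $x$ extends to at most two good bases, up to similarity. Thus, if $N$ denotes the number of elements $x \in I$ (counted up to the sign change $x \mapsto -x$, which preserves the similarity class) that satisfy this norm bound and extend to a good basis, then $I$ has at most $2N$ well-rounded twists. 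The whole problem reduces to proving $N \le 3 + \left[\frac{y+1}{2}\right]$.

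To estimate $N$, I would expand $x = au + cv$ in the canonical basis $u = t$, $v = y + g\sqrt{-D}$, keeping $a \ge 0$ and imposing $\gcd(a,c) = 1$ (the condition for $x$ to extend to a $\mathbb{Z}$-basis). Since $\Re(x) = at + cy$ and $\Im(x) = cg\sqrt{D}$, one computes $\Im(x^2) = 2cg(at+cy)\sqrt{D}$ and $\vol(\Lambda_K(I)) = gt\sqrt{D}$, so the norm bound collapses to the clean inequality $\left|c(at+cy)\right| \le \frac{t}{2}$, exactly the inequality appearing in the proof of Lemma \ref{bode3}. I would then count admissible pairs $(a,c)$ by splitting on $c$.

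The boundary cases contribute only the constant term. If $c = 0$, coprimality forces $a = 1$ and the single element $x = u$. If $c \ge 2$, then solving $\left|c(at+cy)\right| \le \frac{t}{2}$ for $a$ confines $a$ to an interval of length $\frac{1}{c} \le \frac12$ lying strictly below $1$, so the only candidate is $a = 0$, which fails $\gcd(a,c)=1$; hence there are no admissible elements. The cases $c = \pm 1$ give at most the two vectors $v$ and $u-v$, and the purely imaginary elements (those with $at+cy = 0$, treated in Case 3 of Lemma \ref{bode3}) reduce, via $c \mid t$ and $\gcd(a,c)=1$, to a single admissible $c = -t/\gcd(y,t)$ when $y \ne 0$. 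Merging these and removing the redundancy from $x \mapsto -x$ leaves the constant $3$, which becomes the summand $6$ after the factor two from Theorem \ref{theo5}. The heart of the argument is the remaining case $c \le -2$: writing $c = -c'$ with $c' \ge 2$, the inequality becomes $\left|a - \frac{c'y}{t}\right| \le \frac{1}{2c'}$. The decisive idea is to index solutions by $a$ instead of $c'$: for fixed $a \ge 1$ the admissible $c'$ are forced into a short interval about $\frac{at}{y}$ that contains essentially one integer, and the constraint $c' \le \frac{t}{2}$ (which follows from $\left|c(at+cy)\right| \le \frac t2$ together with $\left|at+cy\right| \ge 1$) yields $a \le \frac{y}{2} + \frac14$, hence $a \le \left[\frac{y+1}{2}\right]$. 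This bounds the contribution of $c \le -2$ by $\left[\frac{y+1}{2}\right]$, and summing all cases gives $N \le 3 + \left[\frac{y+1}{2}\right]$.

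I expect the main obstacle to be precisely this last count. The delicate points are to show that for each fixed $a$ at most one value of $c'$ genuinely produces a good basis---ruling out the occasional second candidate permitted by the norm bound, either because it fails to extend to a good basis or because it yields a similar twist---and to carry out the boundary bookkeeping so that the cases $c = 0$, $c = \pm 1$ and the purely imaginary vectors collapse to exactly the constant $3$ under the identification $x \sim -x$ and the ordering symmetry of Proposition \ref{theo3}(iii), with no admissible element omitted or double-counted.
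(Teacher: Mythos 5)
Your overall route is exactly the paper's: the paper's proof of this corollary is a single sentence deferring to the count of solutions of $|c(at+cy)|\le \frac t2$, $a\ge 0$, $\gcd(a,c)=1$ (the inequality from Lemma \ref{bode3}) followed by the factor of two from Theorem \ref{theo5}, and your reduction of the norm bound $\left(\Im(x^2)\right)^2\le \vol^2(\Lambda_K(I))$ to that inequality, as well as your case split on $c$, mirror the paper's own analysis preceding Algorithm \ref{algo1}. The skeleton "every good basis contains such an $x$ by Proposition \ref{theo3}(ii), each such $x$ yields at most two good bases up to similarity by Theorem \ref{theo5}, hence at most $2N$ twists" is the intended argument.

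The gap is in the count of $N$ itself, and you have correctly located it but not closed it. As written, your enumeration produces up to $4+\left[\frac{y+1}{2}\right]$ classes rather than $3+\left[\frac{y+1}{2}\right]$: one for $c=0$ (namely $u$), up to two for $c=\pm1$ (namely $v$ and $u-v$), one purely imaginary element, and one for each $a\in\left[1,\frac{y+1}{2}\right]$ with $c\le-2$. The asserted "collapse to the constant $3$" is never carried out, and it is in fact the crux: it requires observations such as that $v$ occurs only when $y\le t/2$ while $u-v$ occurs only when $y\ge t/2$, so they coexist only at $y=t/2$, where the pair $(1,-2)$ is itself the purely imaginary element; and that the purely imaginary pair $\left(y/\gcd(t,y),\,-t/\gcd(t,y)\right)$ is already absorbed into the per-$a$ count whenever $y/\gcd(t,y)\le\frac{y+1}{2}$, but is a genuinely extra element when $\gcd(t,y)$ is small. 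Moreover, your claim that for each fixed $a\ge1$ the admissible interval for $c'$ "contains essentially one integer" fails at $a=1$: the paper's own computation before Algorithm \ref{algo1} shows the interval \eqref{(45)} has length up to $\sqrt2$ there and can contain two integers (for instance $c=-1$ and $c=-2$ both survive when $t/2\le y\le 5t/8$). To be fair, the paper's one-line proof suppresses all of this bookkeeping too; but since the stated constant $6$ is precisely what is at stake, a complete proof must do it.
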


\begin{proof}
The result can be easily obtained from counting the number solutions of the inequations $\left|(at+cy)c\right|\le \dfrac{t}{2}, a\ge 0,$ and $(a,c)=1$ and by applying Theorem \ref{theo5}.
\end{proof}
Moreover, Corollary \ref{numbertwists} can be implied immediately from Algorithm \ref{algo1}. Similarly, one has the following result when $-D\equiv 1 \pmod 4$.

\begin{corollary}
Let $D$ be a squarefree integer with where $-D \equiv 1 \pmod 4$ and let $I$ be an ideal of  $\mathbb{Q}(\sqrt{-D})$ with the canonical basis $\left\{t,y+g\delta \right\}$. Then $I$ has at most  $6+2\left[\dfrac{2y+g+1}{2}\right]$ well-rounded twists.
\end{corollary}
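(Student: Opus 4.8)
The plan is to follow the proof of Corollary \ref{numbertwists} verbatim, substituting the inequality valid for $-D\equiv 1\pmod 4$ for the one used there. Fix the canonical basis $\{u,v\}=\{t,\,y+g\delta\}$ and write an arbitrary element of $I$ as $x=au+cv$ with $a,c\in\mathbb{Z}$. Such an $x$ extends to a $\mathbb{Z}$-basis of $I$ if and only if $\gcd(a,c)=1$, and after possibly replacing $x$ by $-x$ we may assume $a\ge 0$. By Proposition \ref{theo3}(ii), in any good basis at least one basis vector $w$ satisfies $(\Im(w^2))^2\le\vol^2(\Lambda_K(I))$; hence every well-rounded twist of $I$ is obtained, via Theorem \ref{theo5}, from some such $x$. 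As computed just before Lemma \ref{bode4}, for $-D\equiv 1\pmod 4$ the bound $(\Im(x^2))^2\le\vol^2(\Lambda_K(I))$ is equivalent to $|(2at+2cy+cg)c|\le t$. It therefore suffices to prove that the number of admissible pairs $(a,c)$, counted up to the sign change $x\mapsto -x$, is at most $3+\big[\tfrac{2y+g+1}{2}\big]$; multiplying by the factor $2$ furnished by Theorem \ref{theo5} then gives the claim.

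First I would dispose of the boundary configurations, exactly as in the three cases of the proof of Lemma \ref{bode3}. If $c=0$ then coprimality forces $a=1$; if $a=0$ then $c=\pm 1$, a single element up to the sign change; and if $c\ne 0$ with $2at+2cy+cg=0$, then the reduced solution of $a/c=-(2y+g)/(2t)$ is uniquely determined by $a\ge 0$ (note $2y+g>0$). These three families contribute at most $3$ elements, accounting for the summand $6$. For the remaining, generic, pairs I may assume $a\ge 1$, $c\ne 0$, and $2at+2cy+cg\ne 0$. Rewriting the inequality as $\big|a+\tfrac{c(2y+g)}{2t}\big|\le\tfrac{1}{2|c|}$ shows that $c<0$, since the left-hand side exceeds $1$ when $a\ge 1$ and $c>0$.

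The crux is to bound the number of generic pairs by $\big[\tfrac{2y+g+1}{2}\big]$, and I expect this to be the main obstacle. Here I would count by the value of $a$ rather than by $c$. Writing $c=-c'$ with $c'\ge 1$, the condition $c'\,|2at-c'(2y+g)|\le t$ together with $2at-c'(2y+g)\ne 0$ forces $|2at-c'(2y+g)|\ge 1$, hence $c'\le t$; feeding this back into $\big|a-\tfrac{c'(2y+g)}{2t}\big|\le\tfrac{1}{2c'}$ yields $a\le \tfrac{2y+g}{2}+\tfrac12=\tfrac{2y+g+1}{2}$, so that $1\le a\le\big[\tfrac{2y+g+1}{2}\big]$. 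A short estimate near the zero of the map $c'\mapsto c'\,|2at-c'(2y+g)|$ shows that the admissible $c'$ for a fixed $a$ lie in an interval of width $1/a\le 1$, so each such $a$ contributes at most one generic pair. Combining, the generic pairs number at most $\big[\tfrac{2y+g+1}{2}\big]$, and adding the three boundary elements and applying Theorem \ref{theo5} gives the bound $6+2\big[\tfrac{2y+g+1}{2}\big]$. As noted for Corollary \ref{numbertwists}, the same count also emerges directly from the enumeration carried out by the algorithm of Section \ref{sec_alg}, which provides an alternative route to the same estimate.
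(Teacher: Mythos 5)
Your overall strategy is the same as the paper's: the paper proves this corollary only by the word ``similarly'', pointing back to the one-line proof of Corollary \ref{numbertwists} (count the pairs $(a,c)$ with $|(2at+2cy+cg)c|\le t$, $a\ge 0$, $\gcd(a,c)=1$, and multiply by the factor of two supplied by Theorem \ref{theo5}), and your decomposition into the three boundary families of Lemma \ref{bode3}/Lemma \ref{bode4} plus the generic pairs is exactly the count the authors intend. The reductions you carry out (at least one vector of a good basis satisfies $\left(\Im(w^2)\right)^2\le\vol^2(\Lambda_K(I))$, the sign normalisation, $c<0$ for generic pairs, $c'\le t$ and hence $a\le\frac{2y+g+1}{2}$) are all correct.

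The gap is in the key quantitative step: the claim that for fixed $a\ge 1$ the admissible $c'$ lie in an interval of width $1/a$. That figure comes from linearising $c'\mapsto c'\,|2at-c'(2y+g)|$ at its nonzero root $c'=2a\alpha$, where $\alpha=\frac{t}{2y+g}$, but the exact solution set is the pair of intervals given by $|c'-a\alpha|\in\left[\sqrt{a^2\alpha^2-\alpha},\sqrt{a^2\alpha^2+\alpha}\right]$ (the analogue of \eqref{(45)}--\eqref{(46)}), and the relevant one has width $\frac{2\alpha}{\sqrt{a^2\alpha^2+\alpha}+\sqrt{a^2\alpha^2-\alpha}}$, which is always at least $1/a$ and for $a=1$ is strictly greater than $1$ (it can be as large as $\sqrt{2}$), so it may contain two integers. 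The paper itself concedes this in Section \ref{sec_alg}: for each $a\ge 2$ one gets at most one $c$, but for $a=1$ there can be two. Consequently your argument only bounds the number of generic pairs by $\left[\dfrac{2y+g+1}{2}\right]+1$, which yields $8+2\left[\dfrac{2y+g+1}{2}\right]$ rather than the stated bound. To close the gap you would need to show that when $a=1$ admits two values of $c$, one of them either falls into a boundary family (for instance satisfies $2at+2cy+cg=0$, as happens when $\alpha=1$) or produces a twist similar to one already counted; neither your write-up nor the paper supplies that argument.
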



\begin{proposition}
    \label{theo6}
	Every ideal of $\mathcal{O}_K$ has at least one well-rounded twist lattice.
\end{proposition}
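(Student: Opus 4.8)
The plan is to reduce the statement to the existence of a single \emph{good basis} of $I$. By Proposition~\ref{theo2} a basis $B$ satisfies $F(B)\le 0$ exactly when it is good for twisting, and by Proposition~\ref{prop4} together with the discussion following it, every good basis $B$ produces a value $\alpha=\beta_\Lambda(B)^{1/4}>0$ for which $T_\alpha\Lambda_K(I)$ is well-rounded with minimal basis $T_\alpha B$. Hence it suffices to exhibit one basis $\{u,v\}$ of $I$ with $F(u,v)\le 0$; the proposition will then follow.

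First I would exploit the freedom, already used in Lemma~\ref{bode3} and Theorem~\ref{theo5}, of choosing a convenient distinguished element $x\in I$, and I would take it with $\Im(x^2)=0$. Writing the canonical basis of $I$ as $\{t,\,y+g\delta\}$, I simply set $x=t$. Since $t\in\mathbb{Z}$ is real we have $\Im(x^2)=0$, so in particular $(\Im(x^2))^2=0\le\vol^2(\Lambda_K(I))$ and $x$ trivially extends to a basis. Choosing $\{u,v\}=\{t,\,y+g\delta\}$, which satisfies the hypotheses $\Im(uv)\ne 0$ and $\Im(vx)\ne 0$ of Lemma~\ref{bode1}, I write $x=1\cdot u+0\cdot v$, i.e. $a=1$, $c=0$. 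This places us precisely in Case~2.1 of the proof of Theorem~\ref{theo5} (namely $a\ne 0$ and $\Im(x^2)=0$), where $a^2f_1(b)$ and $a^2f_2(b)$ are \emph{linear} in $b$.

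The heart of the argument is that in this degenerate (linear) case the admissible set is a \emph{closed} interval. By the formulas \eqref{(23)}, the products change sign only on $[\beta_1,\beta_2]$, and a direct computation gives $|\beta_1-\beta_2|=a$; with $a=1$ this is a closed interval of length $1$, which necessarily contains at least one integer $b_0$. Because $c=0$, the divisibility condition $bc+1\equiv 0\pmod a$ from the proof of Theorem~\ref{theo5} is vacuous, and $d=(1+b_0c)/a=1\in\mathbb{Z}$. Since $f_1,f_2$ are then linear with the same slope $-\Im(uv)$, their product $F(B)=f_1f_2$ is a parabola with roots $\beta_1,\beta_2$, hence $F(B)\le 0$ throughout $[\beta_1,\beta_2]$. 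Thus $B=\{t,\,b_0t+y+g\delta\}$, which is unimodularly equivalent to the canonical basis and so is a genuine $\mathbb{Z}$-basis of $I$, is a good basis, and $I$ has a well-rounded twist. As $t\in\mathbb{Z}$ regardless of the residue of $-D$ modulo $4$, the same choice $x=t$ treats both canonical forms uniformly.

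The step I expect to be the main obstacle — and the reason for choosing $x$ with $\Im(x^2)=0$ rather than a generic element — is guaranteeing that the relevant interval actually contains an admissible integer. In the generic case $\Im(x^2)\ne 0$ of Theorem~\ref{theo5}, the sign-change locus is an \emph{open} interval of width $a$ (the intervals $J_1,J_2$), which need not contain any integer satisfying the congruence, so finiteness of good bases alone does not force existence. Restricting to $\Im(x^2)=0$ turns the quadratics into linear functions and the open intervals into the \emph{closed} interval $[\beta_1,\beta_2]$ of length $a=1$; closedness and unit length are exactly what make the integer $b_0$ automatic. The one computation to verify carefully is the width identity $|\beta_1-\beta_2|=a$ and the nonpositivity of $F$ on all of $[\beta_1,\beta_2]$ (not merely at its endpoints), but both follow directly from \eqref{(11)}, \eqref{(12)} and \eqref{(23)}.
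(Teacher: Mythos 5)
Your proof is correct and follows essentially the same route as the paper's: both take $x=t$ (so $a=1$, $c=0$, $\Im(x^2)=0$), land in the linear Case 2.1 of Theorem \ref{theo5}, and observe that the closed interval $[\beta_1,\beta_2]$ of length $a=1$ from \eqref{(23)} must contain an integer $b_0$, yielding a good basis. Your write-up is in fact somewhat more explicit than the paper's (e.g.\ noting that the congruence condition is vacuous when $c=0$ and that $F\le 0$ on the whole closed interval), but the underlying argument is the same.
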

\begin{proof}
We prove this theorem for the case $-D\not \equiv 1 \mod 4$, the case $-D \equiv 1 \mod 4$ can be proved using a similar argument. \\
	As in the proof of Lemma \ref{bode3}, one can see that $z=1.t+0.(y+g\sqrt{-D})=t$ is an element of $I$ which can be extended to a good basis. Moreover, since $\Im(z^2)=0$, by setting $u=t$ and $v=y+g\sqrt{-D}$ and using \eqref{(23)}, one has $\beta_1=\dfrac{t+2y}{-2t}$ and $\beta_2=\dfrac{-t+2y}{-2t}$. 
	There are at most two integers and at least one integers in the interval $[\beta_1, \beta_2$]. Using an argument similar to the one in the proof of Theorem \ref{theo5}, we obtain a good basis of $I$. It provides that $I$ has a well-rounded twist lattice.
\end{proof}
\begin{remark}
In \cite{solan2019stable}, it is proved that every lattice (in any dimension) has at least a well-rounded twist, thus, Proposition \ref{theo6} can be considered as a particular case of the mentioned result. Our proof, however, uses an independent  argument and result from the ones in \cite{solan2019stable}. Indeed Proposition \ref{theo6} is implied from the proofs of Lemma \ref{bode3} and Theorem \ref{theo5}. We remark that a similar result has not been proved in \cite{damir2019well} for real quadratic fields. 
\end{remark}


\section{Algorithms and a numerical example} \label{sec_alg}

The proof of Theorem \ref{theo5} gives us explicit formulae (see \ref{(18)}, \ref{(20)}, \ref{(22)}, \ref{(23)}, \ref{(28)} and \ref{(30)}) to compute all well-rounded twists of an ideal $I$ given by any $\mathbb{Z}$-basis $\{u, v\}$ of $I$. In practice, $I$ is given by the canonical basis that can be efficiently computed if two generators of $I$ over $O_K$ are provided. We also note that the conditions $\Im(uv)\ne 0$ and $\Im(vx)\ne 0$ in Theorem \ref{theo5} is necessary only if $\Im(x^2)=0$. We can exchange the two vectors in the canonical basis of $I$ to have these conditions. Moreover, the canonical basis provides us simpler formulae for $\beta_1,\beta_2,\beta_{11},\beta_{12},\beta_{21},\beta_{22}$ than the ones in a general case shown as below.

 In case $-D\not \equiv 1\  (\text{mod } 4)$, then  $\{t,y+g\sqrt{-D}\}$ is the canonical basis of $I$. Let $u=t,v=y+g\sqrt{-D}$.

\begin{enumerate}[1.]
	\item If $a=0,\Im(x^2)=0$, from $ad-bc=\pm 1$, one has $c=\pm 1$. In these cases, since $\Im (uv)=tg\sqrt{D}\ne 0$, equations in \eqref{(18)} become $\beta_1=\dfrac{-1}{2},\beta_2=\dfrac{1}{2}.$ It implies that $d=0$ and $b=\pm 1$. We only receive the tuple $(0,1,1,0)$ as others give the same well-rounded twist lattices, up to similarity.  
	
	\item If $a=0,\Im(x^2)\ne 0$, then it implies $c^2=1$. Let $\alpha=\dfrac{t}{2y}$. Then  \eqref{(20)} and \eqref{(22)} become
	\begin{align}\label{(32)}
	\beta_{11}&=
	-\dfrac{1}{2}-\alpha-\sqrt{\alpha^2-\dfrac{3}{4}}, 
	\hspace*{0.5cm}\beta_{21}=\beta_{11}+1\\
	\label{(33)}\beta_{12}&=-\dfrac{1}{2}-\alpha+\sqrt{\alpha^2-\dfrac{3}{4}}, \hspace*{0.5cm}\beta_{22}=\beta_{12}+1.
	\end{align}

	\item If $a\ne 0,\Im(x^2)=0$ and $c=0$, the equation \eqref{(23)} becomes\begin{align}\label{(20a)}
	    \beta_1=\dfrac{-1}{2}-\dfrac{y}{t}, \beta_2=\beta_1+1,
	\end{align} where $u=t,v=y+g\sqrt{-D}$. If $a\ne 0,\Im(x^2)=0$ and $c\ne 0$, the equation \eqref{(23)} becomes \begin{align}
	    \label{(20b)}\beta_1=-\dfrac{a}{2},\beta=\dfrac{a}{2},
	\end{align}  where $u=y+g\sqrt{-D},v=t$.
	
	\item If $a\ne 0,\Im(x^2)\ne 0$, denote by $\beta=\dfrac{t}{2c(at+cy)}$, then 
	\eqref{(28)},\eqref{(30)} become
	\begin{align}\label{(36)}
	\left\{\beta_{11},\beta_{12} \right\}=\left\{\dfrac{-ac-2}{2c}+a\beta\pm a\sqrt{\beta^2-\dfrac{3}{4}}\right\},\\\label{(37)}
	\left\{\beta_{21},\beta_{22} \right\}=\left\{\dfrac{ac-2}{2c}+a\beta\pm a\sqrt{\beta^2-\dfrac{3}{4}}\right\}.
	\end{align}
\end{enumerate}

In the case $-D \equiv 1\  (\text{mod } 4)$, then $\left\{t,y+g\dfrac{1+\sqrt{-D}}{2}\right\}$ is the canonical basis of $I$. Let $u=t,v=y+g\dfrac{1+\sqrt{-D}}{2}$.

\begin{enumerate}[1.]
	\item If $a=0,\Im(x^2)=0$, it implies that $\dfrac{2y+g}{2}g\sqrt{D}=0$, which cannot happen.
	
	\item If $a=0,\Im(x^2)\ne 0$, then $c^2=1$. Let $\alpha=\dfrac{t}{2y+g}$. Then \eqref{(20)},\eqref{(22)} become 
	\begin{align}
	\label{(38)}\beta_{11}&=-\dfrac{1}{2}-\alpha-\sqrt{\alpha^2-\dfrac{3}{4}}, \beta_{21}=\beta_{11}+1\\
	\label{(39)}\beta_{12}&=-\dfrac{1}{2}-\alpha+\sqrt{\alpha^2-\dfrac{3}{4}}, \beta_{22}=\beta_{12}+1.
	\end{align}

	\item If $a\ne 0,\Im(x^2)=0$ and $c=0$, then \eqref{(23)} becomes
	\begin{align}\label{(40a)}
	\beta_1=-\dfrac{1}{2}-\dfrac{2y+g}{2t}\text{ and }\beta_2=\beta_1+a.
	\end{align}
	where $u=t,v=y+g\dfrac{1+\sqrt{-D}}{2}$. If $a\ne 0,\Im(x^2)=0$ and $c\ne 0$, then \eqref{(23)} becomes
	\begin{align}\label{(40b)}
	    \beta_1=-\dfrac{a}{2},\beta_2=\dfrac{a}{2}.
	\end{align} where $u=y+g\dfrac{1+\sqrt{-D}}{2},v=t$.
	\item If $a\ne 0,\Im(x^2)\ne 0$, denote by $\beta=\dfrac{t}{c(2at+2cy+cg)}$, then  \eqref{(28)},\eqref{(30)} become 
	\begin{align}\label{(42)}
	\left\{\beta_{11},\beta_{12} \right\}=\left\{\dfrac{-ac-2}{2c}+a\beta\pm a\sqrt{\beta^2-\dfrac{3}{4}}\right\},\\\label{(43)}\left\{\beta_{21},\beta_{22} \right\}=\left\{\dfrac{-ac+2}{2c}+a\beta\pm a\sqrt{\beta^2-\dfrac{3}{4}}\right\}.
	\end{align}
\end{enumerate}

Finally, we have a more efficient strategy to find all good bases of an ideal lattice $I$ compared to the one in page 6 as follows.
\begin{enumerate}[$\square$]
	\item \textbf{Step 1*:} Find the canonical basis $\{u,v\}=\{t,y+g\delta\}$ of $I$.
	\item \textbf{Step 2*:} List all elements of $I$ which can be extended to a basis of $I$.
	\item \textbf{Step 3*:} For each $x$ found in \textbf{Step 2}, identifying all good bases $\{x,y\}$ by using the formulae above. Note that such basis must satisfy the condition \eqref{condition}.
\end{enumerate}

In \textbf{Step 2*}, in case $-D \not \equiv 1 \mod 4$, we want to find $x=au+cv=(at+cy)+cg\sqrt{-D}$ which can be extended to a good basis of I. It is equivalent to $|(at+cy)c|\le \dfrac{t}{2} $. Lemma \ref{bode3} provides us an idea to list all pairs $(a,c)$.\\
In \textbf{Case 1}, the inequality $0<|(at+cy)c|\le \dfrac{t}{2} $ implies $0<|c|\le \dfrac{t}{2}$  and $a\le \dfrac{y+1}{2}$. If $a=0$, then $c=\pm1$ and we also have $0<y\le \dfrac{t}{2}$. For each $a\in \left[1,\dfrac{y+1}{2}\right]$, we must find $c$ satisfing that \begin{align}
    \label{a}-\dfrac{t}{2}\le \left(at+cy\right)c\le \dfrac{t}{2}.
\end{align}
Let $\alpha=\dfrac{t}{2y}\left(\alpha>0\right)$, by considering \eqref{a} as the inequation system in $c$, we obtain\begin{align}
\label{(45)}    -a\alpha-\sqrt{a^2\alpha^2+\alpha}\le c\le -a\alpha -\sqrt{a^2\alpha^2-\alpha}\\
 \label{(46)}-a \alpha+\sqrt{a^2\alpha^2-\alpha}   \le   c   \le   -a \alpha+\sqrt{a^2\alpha^2+\alpha}.
\end{align}
Since $-1<-a \alpha+\sqrt{a^2\alpha^2-\alpha} $ and $-a \alpha+\sqrt{a^2\alpha^2+\alpha}<1$ for all $a\ge 1$, the inequality \eqref{(46)} implies that $c=0$, it is contradict to $|c|>0$. Moreover, if $a\ge 2$, then $\left( -a\alpha -\sqrt{a^2\alpha^2-\alpha}\right)-\left(-a\alpha-\sqrt{a^2\alpha^2+\alpha}\right)< 1$ and if $a=1$, then $\left( -a\alpha -\sqrt{a^2\alpha^2-\alpha}\right)-\left(-a\alpha-\sqrt{a^2\alpha^2+\alpha}\right)\le \sqrt{2}$. Therefore, for each $a\ge 2$, we get at most one $c$ and when $a=1$, we have at most two c.


 In \textbf{Case 3}, if $y=0$, then $a=0$ and $c=\pm 1$. If $y\ne 0$, one has the conditions $at+cy=0$ and $\gcd(a,c)=1$. Thus, $c=-\dfrac{t}{\gcd(t,y)}$ and $a=\dfrac{y}{\gcd(t,y)}$.
 
Similarly, in the case $-D\equiv 1 \mod 4$, one obtains $a\le \dfrac{2y+g+1}{2}$
and then chooses nonzero $c$ satisfying \eqref{(45)}, $2at+2cy+g\ne 0$ and $\gcd(a,c)=1$ where $\alpha=\dfrac{t}{2y+g}$.
Moreover, the conditions $2at+2cy+cg=0$ and $\gcd(a,c)=1$ imply that $a=\dfrac{2y+g}{\gcd(2t,2y+g)}$ and  $c=-\dfrac{2t}{\gcd(2t,2y+g)}$.\\


In the case $-D\not \equiv 1\ (\text{mod } 4)$, a good basis of $I$ has a following form $$\left\{at+c(y+g\sqrt{-D}), bt+d(y+g\sqrt{-D})\right\}.$$ Condition \eqref{condition} can be rewritten as follow.
\begin{align}\label{condition2}
    (at+cy)d+(bt+dy)cg \ne 0.
\end{align}
We compute all good bases of $\Lambda_K(I)$ by the following algorithm.

\begin{algorithm}
	\label{algo1}
	(For $-D\not\equiv 1 \mod 4$)
	\begin{enumerate}[$\bullet$]
	\item\textbf{Input:} $D,t,y,g$ where $\left\{t,y+g\sqrt{-D}\right\}$ is the canonical basis of $I$.
	\item 	\textbf{Output:} The list $L$ of all tuples $\left(a,c,b,d\right)$ where $\left\{at+c(y+g\sqrt{-D}), bt+d(y+g\sqrt{-D})\right\}$ is a good basis of $I$, up to similarity.
	\end{enumerate}
	
	\begin{enumerate}
		\item[\textbf{Step 1}:] Add $(1,0,b,1)$
 into $L$ where $b\in\left[-\dfrac{1}{2}-\dfrac{y}{t},\dfrac{1}{2}-\dfrac{y}{t}\right]$.
 
		\item[\textbf{Step 2}:] If $y=0$, then add $(0,1,1,0)$ into $L$.
		
		\item[\textbf{Step 3}:] If $y\ne 0$, then
		\begin{enumerate}[3.1.]
		\item Compute $c=-\dfrac{t}{\gcd(t,y)}$ and $a=\dfrac{y}{\gcd(t,y)}$, then replace $\{a,c\}$ with $\{-c,-a\}$. Using \eqref{(20b)} to compute $\beta_1,\beta_2=\beta_1+a$. Add $(a,c,-b,-d)$ satisfies \eqref{condition2} into $L$ where $d\in[\beta_1,\beta_2]$ such that $1+dc$ is a multiple of $a$ and $b=\dfrac{1+cd}{a}$.
		
			\item If $t\ge 2y$, then compute $\alpha=\dfrac{t}{2y}$ and compute $\beta_{11},\beta_{12}$ using \eqref{(32)},\eqref{(33)}. Let $\beta_{21}=\beta_{11}+1$ and $\beta_{22}=\beta_{12}+1$. Add all tuples $(0,1,1,d)$ satisfies \eqref{condition2} into $L$ where $d\in [\beta_{11},\beta_{21}]\cup[\beta_{12},\beta_{22}]$.
		 For each integer a in $\left[1,\dfrac{y+1}{2}\right]$, compute all nonzero integers $c$ satisfying \eqref{(45)} and $at+cy\ne 0$ and $\gcd(a,c)=1$. Compute $\beta=\dfrac{t}{2c(at+cy)}$ and $\beta_{11},\beta_{12}$ by using \eqref{(36)}. Let $\beta_{21}=\beta_{11}+a,\beta_{22}=\beta_{12}+a$. Add $(a,c,b,d)$ satisfies \eqref{condition2} into $L$ where $b\in[\beta_{11},\beta_{21}]\cup[\beta_{12},\beta_{22}]$ satisfying that $1+bc$ is a multiple of $a$ and $d=\dfrac{1+bc}{a}$.
		 
		\item If $t<2y$, for each integer a in $\left[1,\dfrac{y+1}{2}\right]$, compute all nonzero integers $c$ satisfying \eqref{(45)}, $at+cy\ne 0$ and $\gcd(a,c)=1$. Compute $\beta=\dfrac{t}{2c(at+cy)}$ and $\beta_{11},\beta_{12}$ by using \eqref{(36)}. Let $\beta_{21}=\beta_{11}+a$ and $\beta_{22}=\beta_{12}+a$. Add $(a,c,b,d)$ satisfies \eqref{condition2} into $L$ where $b\in[\beta_{11},\beta_{21}]\cup[\beta_{12},\beta_{22}]$ satisfying that $1+bc$ is a multiple of $a$ and $d=\dfrac{1+bc}{a}$.
		\end{enumerate} 
	\end{enumerate}
\end{algorithm}

\begin{example}
	Consider $K=\mathbb{Q}(\sqrt{-201})$ and $I=\langle6+3\sqrt{-201}\rangle$ an ideal of $K$. We will find all good bases of $I$ as follow.
\end{example}
The canonical basis of $I$ is $\left\{615,6+3\sqrt{-201}\right\}$. 
Here $D=-201,t=615,y=6,g=3$. We follow all the steps of Algorithm \ref{algo1} as below.
\begin{enumerate}
\item[\textbf{Step 1:}] Since $b \in \left[\dfrac{-209}{410},\dfrac{201}{410}\right]$, we have $b=0$. Add $(1,0,0,1)$ into $L$. 

\item[\textbf{Step 2:}] We ignore Step 2 since $y=6\ne 0$.

\item[\textbf{Step 3:}]
\begin{enumerate}[3.1.]
	\item We have $c=-\dfrac{615}{\gcd(615,6)}=-205$ and $a=\dfrac{6}{\gcd(615,6)}=2$. Replace $(a,c)$ with $(-c,-a)$, one has $a=205,c=-2$. Then using \eqref{(20b)}, one obtains $\beta_{1}= -102.5,\beta_2= 102.5$. We choose $d\in[\beta_1,\beta_2]$ such that $\dfrac{1-2d}{205}$ is an integer. Thus $d=-102$ and hence $b=1$. We add $(2,-205,-1,102)$ into $L$.
\item	Since $t\ge 2y$, one obtains $$\alpha=\dfrac{t}{2y}=\dfrac{205}{4},\beta_{11}\approx -102.99,\beta_{21}\approx -101.99,\beta_{12}\approx -0.5,\beta_{22}\approx 0.5.$$ Then, $d\in[-102.99,-101.99]\cup[-0.5,0.5]$. Thus $d=-102$ or $d=0$. 
Hence, we add $(0,1,1,-102),(0,1,1,0)$ into $L$.\\
Since $a\in \left[1,\dfrac{7}{2}\right]$, then $a\in \{1,2,3\}$.
\begin{itemize}
	\item When $a=1$, one implies $c=-102$ that satisfies $\eqref{(45)}$ and $\gcd(a,c)=1$. Then
	$\beta=\dfrac{-205}{204},\beta_{11}\approx -0.99,\beta_{21}\approx 0.01,\beta_{12}\approx -2.01,\beta_{22}\approx -1.01.$\\
	It implies $b\in \{0,-2\}$ and $d\in \{1,205\}$, respectively.
	\item When $a=2$, then $c=-205$.	Since $at+cy=2.615-205.6=0$, then we eliminate the pair $(2,-205)$.
	\item When $a=3$, there is no value $c$ satisfying \eqref{(45)}.
\end{itemize}
\end{enumerate}
\end{enumerate}

Thus, $L$ contains there are 6 tuples listed the following table of which each column contains tuples defining the same lattice. 
\begin{center}
    \begin{tabular}{|c|c|c|}
     (1,0,0,1)&(2,-205,-1,102)&(0,1,1,-102)\\
     (0,1,1,0) &(1,-102,-2,205)&(1,-102,0,1)
\end{tabular}.
\end{center}
Therefore, there are 3 well-rounded twists of $I$, up to similarity, defined by the following tuples $\left\{\begin{matrix}(1,0,0,1),(2,-205,-1,102),(0,1,1,-102)\end{matrix}\right\}.$


\begin{remark}\label{similarity}
Once can easily checks the similarity of well-rounded twists defined by tuples $(a, b, c, d)$ in $L$ obtained from above algorithms by computing $|\cos \theta_{T_\alpha B}|$ (see Lemma \ref{similarwr}). Indeed,  one has $|\cos \theta_{T_\alpha B}|=\left|\dfrac{(at+cy)c+(bt+dy)d}{(at+cy)d+(bt+dy)c}\right|$ in case $-D\not\equiv 1 \mod 4$ and \\
$|\cos \theta_{T_\alpha B}|=\left|\dfrac{\left(at+c.\dfrac{2y+g}{2}\right)c+\left(bt+d.\dfrac{2y+g}{2}\right)d}{\left(at+c.\dfrac{2y+g}{2}\right)d+\left(bt+d.\dfrac{2y+g}{2}\right)c}\right|$ 
 in case $-D\equiv 1 \mod 4$.
\end{remark}

In the case $-D \equiv 1\ (\text{mod } 4)$, a good basis of $I$ has a following form $$\left\{at+c\left(y+g\dfrac{1+\sqrt{-D}}{2}\right),	 bt+d\left(y+g\dfrac{1+\sqrt{-D}}{2}\right)\right\}.$$ Condition \eqref{condition} can be rewritten as follow.
\begin{align}\label{condition3}
    \left(at+c.\dfrac{2y+g}{2}\right)d+\left(bt+d.\dfrac{2y+g}{2}\right)c\ne 0.
\end{align}
We compute all good bases of $\Lambda_K(I)$ by the following algorithm.
\begin{algorithm}\label{algo2}(For $-D\equiv 1\mod 4$)
	\begin{enumerate}[$\bullet$]
	\item \textbf{Input:} $D,t,y,g$ where $\left\{t,y+g\dfrac{1+\sqrt{-D}}{2}\right\}$ is the canonical basis of $I$.
	\item \textbf{Output:} The list $L$ of all tuples $\left(a,c,b,d\right)$ where\\ $\left\{at+c\left(y+g\dfrac{1+\sqrt{-D}}{2}\right),	 bt+d\left(y+g\dfrac{1+\sqrt{-D}}{2}\right)\right\}$ is a good basis of $I$.		
	\end{enumerate} 
	
	\begin{enumerate}
	\item[\textbf{Step 1}:] Add $(1,0,b,1)$
	into $L$ where $b$ is an integer belonging to $$\left[-\dfrac{1}{2}-\dfrac{2y+g}{2t},\dfrac{1}{2}-\dfrac{2y+g}{2t}\right].$$
	
	\item[\textbf{Step 2}:]
	\begin{enumerate}[2. 1.]
		\item Compute $a=\dfrac{2y+g}{\gcd(2t,2y+g)}$ and $c=-\dfrac{2t}{\gcd(2t,2y+g)}$, then replace $\{a,c\}$ with $\{-c,-a\}$. Using \eqref{(40b)} to compute $\beta_1,\beta_2=\beta_1+a$. Add $(a,c,-b,-d)$ satisfies \eqref{condition3} into $L$ where $d\in[\beta_1,\beta_2]$ such that $1+dc$ is a multiple of $a$ and $b=\dfrac{1+bc}{a}$.
		
		\item If $t\ge 2y+g$, compute $\alpha=\dfrac{t}{2y+g}$ and $\beta_{11}$ and $\beta_{12}$ by using \eqref{(38)} and \eqref{(39)}. Let $\beta_{21}=\beta_{11}+1$ and $\beta_{22}=\beta_{12}+1$. Add all tuples $(0,1,1,d)$ satisfies \eqref{condition3} into $L$ where $d\in[\beta_{11},\beta_{21}]\cup[\beta_{12},\beta_{22}]$. For each integer a in $\left[1,\dfrac{2y+g+1}{2}\right]$, compute all nonzero integers $c$ satisfying \eqref{(45)}, $\gcd(a,c)=1$ and $2at+2cy+cg\ne 0$. Compute $\beta=\dfrac{t}{c(2at+2cy+cg)}$ and $\beta_{11},\beta_{12}$ using \eqref{(42)}. Let $\beta_{21}=\beta_{11}+a,\beta_{22}=\beta_{12}+a$. Add $(a,c,b,d)$ satisfies \eqref{condition3} into $L$ where $b\in[\beta_{11},\beta_{21}]\cup[\beta_{12},\beta_{22}]$ satisfying that $1+bc$ is a multiple of $a$ and $d=\dfrac{1+bc}{a}$.
	
		\item If $t<2y+g$,
	 for each integer a in $\left[1,\dfrac{2y+g+1}{2}\right]$, compute all nonzero integers $c$ satisfying \eqref{(45)}, $\gcd(a,c)=1$ and $2at+2cy+cg\ne 0$. Compute $\beta=\dfrac{t}{c(2at+2cy+cg)}$ and $\beta_{11},\beta_{12},\beta_{21}=\beta_{11}+a,\beta_{22}=\beta_{12}+a$ using \eqref{(42)}. Add $(a,c,b,d)$ satisfies \eqref{condition3} into $L$ where $b\in[\beta_{11},\beta_{21}]\cup[\beta_{12},\beta_{22}]$ satisfying that $1+bc$ is a multiple of $a$ and $d=\dfrac{1+bc}{a}$.
	\end{enumerate}
	\end{enumerate}
\end{algorithm}


\section{Analysis of Algorithm 1 and Algorithm 2}\label{sec_analysis}
In this section, we will prove the correctness and the complexity of Algorithms \ref{algo1} and \ref{algo2}.
\subsection{Correctness} 
Let $I$ be an ideal with the canonical basis $\left\{t,y+g\delta\right\}$. We prove that tuples $(a,c,b,d)$ outputted from Algorithm \ref{algo1} form good bases $\left\{z,z'\right\}$ where $z=at+c\left(y+g\delta\right)$ and $z'=bt+d\left(y+g\delta\right)$.
\begin{definition}
	Suppose that $I$ is an ideal of $\mathcal{O}_K$ and $\left\{t, y+g\delta\right\}$ its canonical basis. A \textbf{good tuple} is a tuple $(a,c,b,d)$ of integer numbers satisfying that $\left\{z,z'\right\}$ is a good basis of $I$ where $z=at+c\left(y+g\delta\right)$, $z'=bt+d\left(y+g\delta\right)$. A pair $(a,c)$ is called \textbf{extendable} for $I$ if $z=at+c\left(y+g\delta\right)$ can be extended to a good basis of $I$.
\end{definition}


We prove the correctness of Algorithm \ref{algo1} as below.

\begin{proposition}
	\label{bode8}All tuples $(a,c,b,d)$ in the output of Algorithm \ref{algo1} are  good tuples of $I$.  Inversely, the output of Algorithm \ref{algo1} completely exports all good tuples of $I$, up to similarity.
\end{proposition}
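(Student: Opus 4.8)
The plan is to prove the two directions of Proposition \ref{bode8} separately, establishing that Algorithm \ref{algo1} is both \emph{sound} (every output tuple is good) and \emph{complete} (every good tuple arises, up to similarity). The whole argument rests on the case analysis already carried out in the proofs of Lemma \ref{bode3} and Theorem \ref{theo5}: the algorithm is simply a bookkeeping transcription of those cases using the canonical basis $\{u,v\}=\{t,y+g\sqrt{-D}\}$, so the proof of correctness amounts to verifying that each branch of the algorithm corresponds exactly to one branch of those proofs.

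For \textbf{soundness}, I would go step by step through the algorithm and check that the inserted tuple $(a,c,b,d)$ always yields a basis (i.e.\ $ad-bc=\pm 1$) satisfying $F(z,z')\le 0$ together with condition \eqref{condition2}. In \textbf{Step 1} the element is $x=t$, so $\Im(x^2)=0$ and $a=1,c=0$; here the roots $\beta_1,\beta_2$ from \eqref{(20a)} bound an interval of width $1$, and the integer $b$ chosen in $\left[-\tfrac12-\tfrac yt,\tfrac12-\tfrac yt\right]$ is exactly the value making $f_1(b)f_2(b)\le 0$, so $\{t,bt+(y+g\sqrt{-D})\}$ is good by Proposition \ref{theo3}(i). \textbf{Step 2} ($y=0$) gives the orthogonal-type basis $(0,1,1,0)$, which one checks directly has $F=0$. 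In \textbf{Step 3} the three sub-branches correspond respectively to Case 3 of Lemma \ref{bode3} (when $at+cy=0$, using \eqref{(20b)}), and to Case 2.2 of Theorem \ref{theo5} (when $\Im(x^2)\ne 0$, using \eqref{(36)}); in each branch the divisibility test $a\mid 1+bc$ guarantees $d=\tfrac{1+bc}{a}\in\mathbb Z$ so $\{x,y\}$ is a genuine $\mathbb Z$-basis, and membership of $b$ (or $d$) in the stated union of intervals guarantees $f_1f_2\le 0$. The extra filter \eqref{condition2} discards the degenerate tuples with $ad+bc=0$.

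For \textbf{completeness}, I would start from an arbitrary good tuple $(a,c,b,d)$ and show the algorithm reproduces one similar to it. By Proposition \ref{theo3}(ii) the pair $(a,c)$ must be \emph{extendable}, i.e.\ $\big(\Im(x^2)\big)^2\le\vol^2(\Lambda_K(I))$, which by Lemma \ref{bode3} translates into $|(at+cy)c|\le \tfrac t2$. The classification in Lemma \ref{bode3} then forces $(a,c)$ into exactly one of the enumerated situations: the linear case $c=0$ (forcing $a=\pm1$, matched by Step 1 after normalizing $a\ge 0$), the case $at+cy=0$ (matched by Step 3.1, where the derivation $c=-t/\gcd(t,y)$, $a=y/\gcd(t,y)$ exhausts all solutions with $\gcd(a,c)=1$), and the generic case $0<|(at+cy)c|\le\tfrac t2$ with the range $a\in[1,\tfrac{y+1}2]$ and $c$ pinned down by \eqref{(45)} (matched by Steps 3.2/3.3). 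Having fixed an extendable $(a,c)$, Theorem \ref{theo5} guarantees at most two completions $y=bu+dv$ up to similarity, and these are precisely the integers in the width-$a$ intervals $[\beta_{11},\beta_{21}]\cup[\beta_{12},\beta_{22}]$ satisfying the congruence $1+bc\equiv 0\pmod a$; the algorithm searches exactly this set, so no good completion is missed. The hexagonal degenerate completions (the boundary roots $\beta_{ij}$) all give similar lattices by Proposition \ref{prop6}, so restricting to the stated half-open/closed intervals loses nothing up to similarity.

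The step I expect to be the main obstacle is pinning down the bookkeeping in the completeness direction so that the intervals and congruence conditions provably capture \emph{every} good $(b,d)$, not merely the ones the proof of Theorem \ref{theo5} displays; in particular I must confirm that the normalizations used in the algorithm—replacing $x$ by $-x$ and $y$ by $-y$, the swap $\{a,c\}\leftrightarrow\{-c,-a\}$ in Step 3.1, and the reduction to $a>0$, $ad-bc=1$—preserve the similarity class (which follows from the discussion after Proposition \ref{theo1} and from Proposition \ref{theo3}(iii)) and that no good tuple is double-counted or, worse, excluded by these reductions. The remaining verifications, namely re-deriving the canonical-basis forms \eqref{(20a)}, \eqref{(20b)}, \eqref{(32)}--\eqref{(37)} of the roots and checking the interval widths ($1$ when $a=1$, $a$ in general), are routine substitutions into the formulae \eqref{(18)}, \eqref{(20)}, \eqref{(22)}, \eqref{(23)}, \eqref{(28)}, \eqref{(30)} established in the proof of Theorem \ref{theo5}, and I would treat them as such rather than expanding them in full.
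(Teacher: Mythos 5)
Your proposal is correct and follows essentially the same route as the paper's (much terser) proof: reduce goodness of a tuple $(a,c,b,d)$ to extendability of the pair $(a,c)$ characterized in Lemma \ref{bode3} by $\gcd(a,c)=1$ and $|c(at+cy)|\le \tfrac t2$, and then rely on the interval/congruence case analysis of Theorem \ref{theo5} as transcribed into the canonical-basis formulae preceding Algorithm \ref{algo1}. (One cosmetic slip: for the tuple $(0,1,1,0)$ in Step 2 one computes $F<0$ — an orthogonal rather than hexagonal situation — not $F=0$; this is harmless since only $F\le 0$ is needed.)
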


\begin{proof}
We notice that $(a,c,b,d)$ is a good tuple if and only if $(a,c)$ is extendable. By the argument of given in the proof of Lemma \ref{bode3}, the pair $(a,c)$ is extendable if and only if $\gcd(a,c)=1$ and $\left|c(at+cy)\right|\le \dfrac{t}{2}$. Hence, it is trivial to prove this proposition using the computation shown before Algorithm \ref{algo1}.
\end{proof}
Similarly, we can show the correctness of Algorithm \ref{algo2} as below. 

\begin{proposition}
	\label{bode9}All tuples $(a,c,b,d)$ in output of Algorithm \ref{algo2} are good tuples of $I$.  Inversely, the output of Algorithm \ref{algo2} completely exports all good tuples of $I$, up to similarity.
\end{proposition}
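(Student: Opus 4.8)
The plan is to mirror the proof of Proposition \ref{bode8} almost verbatim, since Algorithm \ref{algo2} is the exact analogue of Algorithm \ref{algo1} for the case $-D\equiv 1\pmod 4$. The central observation is the same in both settings: a tuple $(a,c,b,d)$ is a good tuple precisely when the pair $(a,c)$ is extendable to a good basis and the accompanying $(b,d)$ realizes such an extension. So the proof reduces to two independent verifications, namely that every tuple the algorithm emits is good, and that every good tuple (up to similarity) is emitted.

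For the first direction I would argue that each branch of the algorithm computes, for a fixed extendable $(a,c)$, exactly those $(b,d)$ that make $\{z,z'\}$ a good basis. The key input here is Lemma \ref{bode4}, which (via the same computation as Lemma \ref{bode3}) shows that when $-D\equiv 1\pmod 4$ the pair $(a,c)$ is extendable if and only if $\gcd(a,c)=1$ and $\left|c(2at+2cy+cg)\right|\le t$; this is precisely the inequality the algorithm enforces when selecting $c$ via \eqref{(45)}. Then, for the selected $(a,c)$, the roots $\beta_{11},\beta_{12},\beta_{21},\beta_{22}$ computed from \eqref{(42)}, \eqref{(43)} (together with \eqref{(38)}--\eqref{(40b)} in the degenerate sub-cases) are exactly the endpoints of the intervals on which $f_1$ and $f_2$ have opposite signs, as established in the proof of Theorem \ref{theo5}. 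Enforcing the divisibility condition $1+bc\equiv 0\pmod a$ guarantees $d=\tfrac{1+bc}{a}\in\mathbb{Z}$, and imposing condition \eqref{condition3} rules out the excluded case $ad+bc=0$. Hence every output tuple yields $F(z,z')\le 0$, i.e.\ is a good tuple by Proposition \ref{theo3}(i).

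For the converse I would take an arbitrary good tuple $(a,c,b,d)$ and, after normalizing by sign (replacing $x$ with $-x$, $y$ with $-y$) so that $a\ge 0$ and $ad-bc=1$ as in Theorem \ref{theo5}, show it lies in the similarity class of some output tuple. Here I would sort by the trichotomy of the proof of Lemma \ref{bode4}: the case $c=0$ (forcing $a=\pm1$) is produced in Step 1; the case $c\ne0$ with $2at+2cy+cg=0$ is produced in Step 2.1 using the divisor computation $a=\tfrac{2y+g}{\gcd(2t,2y+g)}$, $c=-\tfrac{2t}{\gcd(2t,2y+g)}$; and the generic case $0<\left|c(2at+2cy+cg)\right|\le t$ is produced in Step 2.2 or 2.3 depending on whether $t\ge 2y+g$. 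In each branch the finitely many admissible $(b,d)$ are exactly those traversed by the algorithm, and Theorem \ref{theo5} guarantees at most two per $x$ up to similarity, matching what the intervals $[\beta_{11},\beta_{21}]\cup[\beta_{12},\beta_{22}]$ produce.

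I expect the only real obstacle to be bookkeeping rather than mathematics: one must check that the substitution $\alpha=\tfrac{t}{2y+g}$ and $\beta=\tfrac{t}{c(2at+2cy+cg)}$ correctly specialize the general root formulae \eqref{(20)}, \eqref{(22)}, \eqref{(28)}, \eqref{(30)} to the canonical-basis formulae \eqref{(38)}--\eqref{(43)}, and that the half-open versus closed interval conventions are handled so that hexagonal (boundary) bases are counted once up to similarity, as in the proof of Theorem \ref{theo5}. Since all the analytic content is already contained in Lemma \ref{bode4} and Theorem \ref{theo5}, I would simply state that the proof proceeds \emph{mutatis mutandis} from that of Proposition \ref{bode8}, citing the computation carried out before Algorithm \ref{algo2} for the explicit specialization of the root formulae, rather than reproducing the case analysis in full.
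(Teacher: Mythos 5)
Your proposal is correct and follows essentially the same route as the paper: the paper itself gives no separate argument for this proposition, stating only that it follows ``similarly'' to Proposition \ref{bode8}, whose proof likewise reduces to the extendability criterion from the proof of Lemma \ref{bode3} (here Lemma \ref{bode4}) together with the computations preceding the algorithm and Theorem \ref{theo5}. If anything, your version is slightly more careful than the paper's, since you note explicitly that a good tuple requires both that $(a,c)$ be extendable and that $(b,d)$ realize a good extension, whereas the paper's proof of Proposition \ref{bode8} conflates these two conditions.
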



\subsection{Complexity}
In this section, we provide an upper bound for the number of loops and operations in Algorithms \ref{algo1} and \ref{algo2}. 

\begin{lemma}\label{bode10}
	The total number of loops of Algorithm \ref{algo1} is at most $y+2$. In addition, an upper bound for the number of operations (expect addition) of each loop in Algorithm \ref{algo1} is $57$.
\end{lemma}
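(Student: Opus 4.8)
The plan is to treat the two assertions separately: first bound the number of loop iterations by $y+2$, and then bound the non-addition operations inside one iteration of the main loop by $57$. For the first assertion, the key observation is that every step of Algorithm \ref{algo1} except the final $a$-loop in Step 3.2 (respectively Step 3.3) executes only a bounded number of iterations. Indeed, Step 1 enumerates the integers $b$ in an interval of width $1$, so it contributes at most one iteration; Step 2 runs only when $y=0$ and then inserts a single tuple; and in Step 3.1, after one $\gcd$ computation the integer $d$ is pinned down as the unique solution of $1+dc\equiv 0\pmod a$ lying in $[\beta_1,\beta_2]$, so it too is a single pass. Likewise the enumeration of the tuples $(0,1,1,d)$ in Step 3.2 ranges over two intervals of width $1$ and hence contributes at most two iterations.

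Next I would isolate the only input-dependent loop, namely the loop over the integers $a\in\left[1,\dfrac{y+1}{2}\right]$ in Steps 3.2 and 3.3, which runs at most $\left[\dfrac{y+1}{2}\right]$ times. For each such $a$ I would invoke the width estimates established just before Algorithm \ref{algo1}: inequality \eqref{(46)} forces $c=0$ (excluded), while \eqref{(45)} shows that the admissible nonzero $c$ lie in an interval of length at most $\sqrt 2$ when $a=1$ and of length strictly less than $1$ when $a\ge 2$; thus each $a$ yields at most two values of $c$ when $a=1$ and at most one otherwise. Since for each admissible pair $(a,c)$ the value $b$ is again determined as the unique solution of $1+bc\equiv 0\pmod a$ in each of the two width-$a$ intervals $[\beta_{11},\beta_{21}]$ and $[\beta_{12},\beta_{22}]$, the body of the $a$-loop performs a bounded amount of work per pair. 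Hence the number of admissible pairs $(a,c)$ processed in the $a$-loop is at most $\left[\dfrac{y+1}{2}\right]+1$. Adding this to the constantly many iterations contributed by Steps 1, 2, 3.1 and the $(0,1,1,d)$-enumeration, and checking the few small values of $y$ directly, yields the stated bound $y+2$.

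For the second assertion I would fix a cost model in which every multiplication, division, square root, comparison, floor, $\gcd$ and linear-congruence solution counts as one operation, while additions and subtractions are free, and then tally the operations performed in one pass of the main loop. Concretely one accounts for forming the candidate bounds for $c$ from \eqref{(45)} (the products $a^2\alpha^2$, the two square roots, and the comparisons delimiting $c$); testing $\gcd(a,c)=1$ and $at+cy\neq 0$; evaluating $\beta=\dfrac{t}{2c(at+cy)}$; computing the four endpoints $\beta_{11},\beta_{12},\beta_{21},\beta_{22}$ from \eqref{(36)} (the products $ac$ and $a\beta$, the quantity $\beta^2-\tfrac34$, its square root, and the product with $a$); solving the congruences $1+bc\equiv 0\pmod a$ and forming $d=\dfrac{1+bc}{a}$; and finally verifying \eqref{condition2} before inserting the tuple. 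Summing these against the fixed model gives a total not exceeding $57$.

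The mathematical content here is light; the genuine difficulty is purely bookkeeping. For the loop count the delicate point is to make the deliberately loose estimate collapse to exactly $y+2$ while correctly handling the mutually exclusive branches ($y=0$ versus $y\neq 0$, and $t\ge 2y$ versus $t<2y$) together with the small-$y$ edge cases; for the operation count the delicate point is to commit to a precise convention for what constitutes a single operation (in particular whether a square root, a $\gcd$, or a congruence solution is charged as one unit), since the constant $57$ is entirely an artifact of that convention. I expect the operation tally to be the more error-prone of the two, and I would organize it as a line-by-line table over the statements in the body of the $a$-loop so that the bound is auditable.
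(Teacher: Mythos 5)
Your proposal is correct and follows essentially the same route as the paper, whose own proof merely declares the iteration bound ``easy to see'' and asserts that the worst case for the operation count occurs in the loop over $a\in\left[1,\frac{y+1}{2}\right]$, where it charges $57$ operations. Your write-up is in fact more explicit than the paper's one-line argument; the only caveat is that, like the paper, you leave the final arithmetic that collapses the iteration count to exactly $y+2$ and the per-loop tally to exactly $57$ (under a stated cost convention) as bookkeeping to be verified.
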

\begin{proof}
It is easy to see the first statement. Since the maximum number of operations in Algorithm \ref{algo1} occurs when $a\in \left[1,\dfrac{y+1}{2}\right]$ and it took us 57 operations (expect addition), the second statement is obtained.
\end{proof}


\begin{lemma}
	\label{bode12} The largest number computed in Algorithm \ref{algo1} is $\dfrac{5t(y+1)+8}{8}$.
\end{lemma}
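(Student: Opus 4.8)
The plan is to run through Algorithm \ref{algo1} branch by branch, bound the absolute value of every quantity it evaluates, and verify that the largest possible value is $\frac{5t(y+1)+8}{8}$, which is forced by the main loop over $a\in\left[1,\frac{y+1}{2}\right]$ in Steps 3.2 and 3.3. First I would dispose of the cheap branches. Step 1 only evaluates $\pm\frac12-\frac yt=O(1)$; Step 2 outputs the constant tuple $(0,1,1,0)$; in Step 3.1 the largest numbers are $a=\frac{t}{\gcd(t,y)}\le t$ and $\beta_2=\frac a2\le\frac t2$ coming from \eqref{(20b)}; the first part of Step 3.2 produces $\beta_{11}=-\frac12-\alpha-\sqrt{\alpha^2-\frac34}$ with $\alpha=\frac{t}{2y}$, so $|\beta_{11}|\le\frac12+\frac ty$; and the square-root evaluations used to locate $c$ via \eqref{(45)} are of size $O(t)$ since $a\alpha\le\frac t2$. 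Because the loop is entered only when $y\ge1$ (the case $y=0$ is diverted to Step 2), and hence $t\ge2$, all of these are comfortably below $\frac{5t(y+1)+8}{8}$, so I would set them aside.

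For the main loop I would exploit the two structural bounds from the discussion preceding Algorithm \ref{algo1} and from Lemma \ref{bode3}: an admissible $a$ satisfies $1\le a\le\frac{y+1}{2}$, and an admissible $c$ satisfies $1\le|c(at+cy)|\le\frac t2$ because $c(at+cy)$ is a nonzero integer controlled by the extendability inequality. Writing $m=at+cy$ and $\beta=\frac{t}{2cm}$ as in \eqref{(36)}--\eqref{(37)}, every interval endpoint has the shape $\frac{\pm ac-2}{2c}+a\beta\pm a\sqrt{\beta^2-\frac34}$. Using $\sqrt{\beta^2-\frac34}\le|\beta|$ collapses the last two terms into $2a|\beta|=\frac{at}{|cm|}\le at\le\frac{t(y+1)}{2}$, the middle step being $|cm|=|c(at+cy)|\ge1$, while $\left|\frac{\pm ac-2}{2c}\right|\le\frac a2+1\le\frac{y+1}{4}+1$. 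Hence each endpoint is at most $\frac{t(y+1)}{2}+\frac{y+1}{4}+1=\frac{(y+1)(4t+2)+8}{8}$, and the inequality $4t+2\le5t$ (valid since $t\ge2$) yields the bound $\frac{5t(y+1)+8}{8}$, with equality at $t=2$.

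The step I expect to be the real obstacle is the dependent coordinate $d=\frac{1+bc}{a}$ returned by the loop (and any coordinate obtained from $bc+1\equiv0\pmod a$): the naive split $|bc|\le|b|\,|c|$ only gives $O(t^2)$ and would destroy the claim, and the worked example already shows that $d$ can exceed every interval endpoint. The cure I would use is the cancellation $ac\beta=\frac{at}{2m}$, in which the factor $c$ in $\beta$ disappears; multiplying an endpoint by $c$ then gives $|bc|\le\left|\frac{\pm ac-2}{2}\right|+2|ac\beta|\le\frac{|ac|}{2}+1+\frac{at}{|m|}\le\frac{t(y+1)}{8}+1+\frac{t(y+1)}{2}=\frac{5t(y+1)}{8}+1$, so that $|d|=\frac{|1+bc|}{a}\le\frac{5t(y+1)}{8}+2$. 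The genuinely delicate point is then shaving this to the stated constant: the two extreme estimates $|ac|=\frac{t(y+1)}{4}$ and $\frac{at}{|m|}=at$ cannot hold simultaneously, since the latter forces $|m|=1$ and hence a small $a$ together with a small $|ac|$, and tracking this trade-off is exactly what brings the additive constant down to match $\frac{5t(y+1)+8}{8}$ rather than a slightly larger value. Collecting the three cases then shows that every number computed by Algorithm \ref{algo1} is at most $\frac{5t(y+1)+8}{8}$.
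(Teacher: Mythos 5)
Your main estimate coincides with the paper's own proof: for the interval endpoints coming from \eqref{(36)} and \eqref{(37)} you apply the triangle inequality, absorb $a\sqrt{\beta^2-\tfrac34}$ into $a|\beta|$, use the fact that $c(at+cy)$ is a nonzero integer to get $2a|\beta|=\dfrac{at}{|c(at+cy)|}\le at\le\dfrac{t(y+1)}{2}$, and bound the remaining term by a quantity of size $\dfrac{t(y+1)}{8}+1$. The paper does exactly this (it uses $\left|\dfrac{ac-2}{2c}\right|\le\dfrac{|ac-2|}{2}\le\dfrac{t(y+1)+8}{8}$ where you use $\dfrac a2+1$ together with $4t+2\le 5t$ for $t\ge 2$), and both routes land on $\dfrac{5t(y+1)+8}{8}$. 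Your handling of the remaining branches is consistent with the paper and in one spot more careful: for \eqref{(20b)} the paper asserts $|\beta_1|\le y+1$, which fails after the replacement of $a$ by $t/\gcd(t,y)$ (in the worked example $\beta_1=-102.5$ while $y+1=7$), whereas your bound $|\beta_2|=\dfrac a2\le\dfrac t2$ is correct and still below the target.

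The one genuine gap is your bound on the dependent coordinate $d=\dfrac{1+bc}{a}$. You are right that this is a number the algorithm computes and that it is not controlled by the interval endpoints alone; the paper's proof simply never mentions it. Your cancellation $ac\beta=\dfrac{at}{2(at+cy)}$ is the correct device and does yield $|bc|\le\dfrac{5t(y+1)}{8}+1$, but that only gives $|d|\le|1+bc|\le\dfrac{5t(y+1)+16}{8}$, one more than the stated constant. The trade-off you invoke to shave off that final $1$ (that $|at+cy|=1$ forces $a$ and $|ac|$ to be small) is asserted, not carried out, so as written your argument proves the lemma only with the marginally weaker constant $\dfrac{5t(y+1)+16}{8}$. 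You should either complete that case analysis explicitly or flag that the discrepancy is with the literal statement of the lemma, since the paper's own proof omits $d$ altogether and therefore does not establish the sharper claim either.
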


\begin{proof}
	First, we have $|c|\le \dfrac{t}{2}$ and  
	$a\le \dfrac{y+1}{2}$.
	Using \eqref{(32)} and \eqref{(33)}, one obtains that $ -\dfrac{1}{2}-2\alpha	\le\beta_{11}, \beta_{12} \le -\dfrac{1}{2}$.
 
	In case of applying \eqref{(36)}, since $c<0$ and $a>0$, one has
\begin{align*}
\dfrac{1}{2}\le \left|\dfrac{ac-2}{2c}+a\beta\pm a\sqrt{\beta^2-\dfrac{3}{4}}\right|\le\dfrac{|ac-2|}{2}+2a|\beta|\le \dfrac{t(y+1)+8}{8}+\dfrac{t(y+1)}{2}=\dfrac{5t(y+1)+8}{8}.
\end{align*}
	In the other words, we have the following inequality $\left|\beta_{ij}\right| \le\dfrac{5t(y+1)+8}{8}$. 
	
If we employ \eqref{(20a)},\eqref{(20b)} to compute $\beta_{1}$, then $
\left|\beta_1\right|\le y+1.$\\
Thus $\dfrac{5t(y+1)+8}{8}$ is the maximum number computed in Algorithm \ref{algo1}.
\end{proof}

Similarly, we have the following results for Algorithm \ref{algo2}.


\begin{lemma}
	In Algorithm \ref{algo2}, the total number of loops of  is at most $2y+g+2$ and an upper bound for the number of operations of each loop  is $65$. In addition, the largest number computed in this algorithm  is 
	$\dfrac{5t(2y+g+1)+4}{4}$.
\end{lemma}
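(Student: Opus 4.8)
The plan is to follow the three bounds already proved for Algorithm \ref{algo1} in Lemma \ref{bode10} and Lemma \ref{bode12}, transposing every estimate to the canonical basis $\left\{t,\,y+g\frac{1+\sqrt{-D}}{2}\right\}$ of the $-D\equiv 1\pmod 4$ case. The only structural changes are that the outer loop variable now ranges over $a\in\left[1,\frac{2y+g+1}{2}\right]$ (in place of $\left[1,\frac{y+1}{2}\right]$), that the extendability condition reads $|(2at+2cy+cg)c|\le t$ (in place of $|(at+cy)c|\le\frac t2$), and that the working formulae for the roots are \eqref{(42)} and \eqref{(43)} with $\beta=\frac{t}{c(2at+2cy+cg)}$. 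I would first record that, on the range where $\Im(x^2)\ne 0$, both $c$ and $m:=2at+2cy+cg$ are nonzero integers, so $|c|\ge 1$ and $|m|\ge 1$; hence $|mc|\le t$ forces $|c|\le t$, and together with $a\le\frac{2y+g+1}{2}$ these two inequalities feed every subsequent estimate.

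For the loop count I would argue exactly as in Lemma \ref{bode10}: the outer loop runs over the integers $a\in\left[1,\frac{2y+g+1}{2}\right]$, each contributing at most two admissible values of $c$, which already accounts for at most $2y+g+1$ iterations, and to this one adds the fixed number of non-loop steps (Step 1, Step 2.1 and the insertions $(0,1,1,d)$), giving the bound $2y+g+2$. For the per-loop operation count I would tally the arithmetic operations (other than additions, as in Lemma \ref{bode10}) in the heaviest branch, namely Step 2.2: forming $\alpha$ or $\beta$, evaluating $\beta_{11},\beta_{12}$ via \eqref{(42)}, shifting to $\beta_{21},\beta_{22}$, and testing $\gcd(a,c)=1$, $2at+2cy+cg\ne 0$, divisibility of $1+bc$ by $a$, and \eqref{condition3}. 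This comes out to $65$, the eight operations beyond the $57$ of Lemma \ref{bode10} being precisely those produced by the heavier expressions $2at+2cy+cg$ and \eqref{condition3}.

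The substantive estimate is the largest number computed, which I would obtain in direct analogy with Lemma \ref{bode12}. The dominant candidates are the roots from \eqref{(42)}–\eqref{(43)}; writing either as $\frac{-ac\pm 2}{2c}+a\beta\pm a\sqrt{\beta^2-\frac34}$ and using $\sqrt{\beta^2-\frac34}\le|\beta|$ together with $|c|\ge 1$, I would bound
\begin{align*}
\left|\frac{-ac\pm2}{2c}+a\beta\pm a\sqrt{\beta^2-\tfrac34}\right|\le\frac{|ac|+2}{2|c|}+2a|\beta|\le\frac{|ac|+2}{2}+2a|\beta|.
\end{align*}
Then $|ac|\le\frac{t(2y+g+1)}{2}$ gives $\frac{|ac|+2}{2}\le\frac{t(2y+g+1)+4}{4}$, while $a|\beta|=\frac{at}{|c|\,|m|}\le at\le\frac{t(2y+g+1)}{2}$ gives $2a|\beta|\le t(2y+g+1)$; summing the two yields exactly $\frac{5t(2y+g+1)+4}{4}$. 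To close the argument I would check that the remaining computed quantities are dominated: the roots from \eqref{(38)}--\eqref{(39)} satisfy $|\beta_{1j}|\le\frac12+2\alpha=\frac12+\frac{2t}{2y+g}$, and those from \eqref{(40a)}--\eqref{(40b)} satisfy $|\beta_i|\le\frac{2y+g}{2}+a$, both comfortably below $\frac{5t(2y+g+1)+4}{4}$.

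I expect the main obstacle to be bookkeeping rather than conceptual. Landing the operation count on exactly $65$ requires fixing a precise convention for which operations are counted and confirming that the worst case occurs in the Step 2.2 branch rather than in Step 1 or Step 2.1. The one genuine point to watch is that here $|c|\le t$, not $|c|\le\frac t2$ as in Algorithm \ref{algo1}: it is this weaker bound on $|c|$ that replaces the denominator $8$ of Lemma \ref{bode12} by $4$ and yields $\frac{5t(2y+g+1)+4}{4}$, so the estimate must not inadvertently import the sharper real-part bound valid only in the $-D\not\equiv 1$ setting.
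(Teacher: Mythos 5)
Your proposal is correct and follows exactly the route the paper intends: the paper gives no explicit proof of this lemma, only the remark that it follows ``similarly'' from Lemmas \ref{bode10} and \ref{bode12}, and your transposition (replacing $a\le\frac{y+1}{2}$ by $a\le\frac{2y+g+1}{2}$, $|c|\le\frac t2$ by $|c|\le t$, and $\beta=\frac{t}{2c(at+cy)}$ by $\beta=\frac{t}{c(2at+2cy+cg)}$) reproduces the claimed constant $\frac{5t(2y+g+1)+4}{4}$ by the same two-term estimate used in Lemma \ref{bode12}. You also correctly isolate the one point where the analogy is not mechanical, namely that the weaker bound $|c|\le t$ is what changes the denominator from $8$ to $4$.
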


\section*{Acknowledgement}
The third author acknowledges the support of the Natural Sciences and Engineering Research Council of Canada (NSERC) (funding RGPIN-2019-04209 and DGECR-
2019-00428).

\bibliographystyle{abbrv}
\bibliography{myrefs}

\end{document}